\sloppy \pagestyle{plain}
\newtheorem{theorem}[equation]{Theorem}
\newtheorem*{theorem*}{Theorem}
\newtheorem{lemma}[equation]{Lemma}
\newtheorem{corollary}[equation]{Corollary}
\newtheorem{question}[equation]{Question}
\newtheorem{proposition}[equation]{Proposition}
\theoremstyle{definition}
\newtheorem{example}[equation]{Example}
\newtheorem{definition}[equation]{Definition}
\newtheorem*{definition*}{Definition}
\theoremstyle{remark}
\newtheorem{remark}[equation]{Remark}
\makeatletter\@addtoreset{equation}{section}
\def \KK {\mathbb{K}}
\newcommand{\CC}{\mathbb{C}}
\newcommand{\QQ}{\mathbb{Q}}
\newcommand{\ZZ}{\mathbb{Z}}
\def \P {\mathbb{P}}
\newcommand{\OOO}{{\mathscr{O}}}
\newcommand{\Aut}{\operatorname{Aut}}
\newcommand{\GL}{\operatorname{GL}}
\newcommand{\Bir}{\operatorname{Bir}}
\newcommand{\xref}[1]{{\rm \ref{#1}}}
\newcommand{\Pic}{\operatorname{Pic}}
\newcommand{\red}{\operatorname{red}}
\newcommand{\Gal}{\operatorname{Gal}}
\newcommand{\Alb}{\operatorname{Alb}}
\def \ge {\geqslant}
\def \le {\leqslant}
\newtheorem*{claim*}{Claim}
\title{Finite groups of birational selfmaps of threefolds}
\author{Yuri Prokhorov}
\author{Constantin Shramov}
\thanks{This work was supported by the Russian Science Foundation under grant 14-50-00005.}
\address{
Steklov Mathematical Institute of Russian Academy of Sciences, 8 Gubkina st., Moscow, Russia, 119991
}
\email{prokhoro@mi.ras.ru, costya.shramov@gmail.com}
\begin{document}

\begin{abstract}
We classify threefolds with non-Jordan birational automorphism groups.
\end{abstract}

\maketitle
\section{Introduction}

Finite subgroups of birational automorphism groups are a classical object of
study. I.\,Dolgachev and V.\,Iskovskikh \cite{Dolgachev-Iskovskikh}
managed to classify all finite subgroups of the birational automorphism group
of a plane over an algebraically closed field of characteristic zero.
However, for an arbitrary variety $X$ it would be naive to expect
any kind of classification for finite subgroups of its birational automorphism
group~$\Bir(X)$. Thus it is reasonable to find out some general properties that hold for all
such subgroups.

As a starting point one can look at common properties of
finite subgroups of linear algebraic groups
over fields of characteristic zero.
The following result is due to
H.\,Minkowski (see e.\,g.~\cite[Theorem~5]{Serre2007}
and~\cite[\S4.3]{Serre2007})
and C.\,Jordan (see~\cite[Theorem~36.13]{Curtis-Reiner-1962}).

\begin{theorem}
\label{theorem:Jordan}
If $\Bbbk$ is a number field, then there is a constant
$B=B(n, \Bbbk)$ such that for any finite subgroup
$G\subset\GL_n(\Bbbk)$ one has $|G|\le B$.
If $\Bbbk$ is an arbitrary field of characteristic zero, then
there is a constant~\mbox{$J=J(n)$} such that for any
finite subgroup~\mbox{$G\subset\GL_n(\Bbbk)$}
there exists
a normal abelian subgroup~\mbox{$A\subset G$} of index at most $J$.
\end{theorem}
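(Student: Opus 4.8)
The statement packages two classical results of rather different character, and I would prove them by unrelated arguments: the second (Jordan) assertion over an arbitrary field of characteristic zero by a compactness and commutator argument, and the first (Minkowski) assertion over a number field by reduction modulo a prime. For the Jordan part I would first reduce to $\Bbbk=\CC$. A finite subgroup $G\subset\GL_n(\Bbbk)$ involves only finitely many entries, which generate a finitely generated extension $K$ of $\QQ$ inside $\Bbbk$; since $\operatorname{char}\Bbbk=0$ this $K$ has finite transcendence degree over $\QQ$ and therefore embeds into $\CC$, so it suffices to treat $G\subset\GL_n(\CC)$. Averaging an arbitrary positive definite Hermitian form over the finite group $G$ yields a $G$-invariant one, and after a change of basis I may assume $G\subset U(n)$; throughout I use the operator norm $\|\cdot\|$, which is invariant under left and right multiplication by unitary matrices.

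Next I would isolate the near-identity part of $G$. Fix a small explicit constant $\varepsilon$ (for instance $\varepsilon=\tfrac12$) and let $A\subset G$ be the subgroup generated by $S=\{g\in G:\|g-\id\|<\varepsilon\}$. Since $\|hgh^{-1}-\id\|=\|g-\id\|$ for every $h\in U(n)$, the set $S$ is invariant under conjugation, so $A\trianglelefteq G$. To bound the index I would use a packing estimate: if $g_1,g_2$ lie in different cosets of $A$ then $g_1^{-1}g_2\notin S$, hence $\|g_1-g_2\|=\|g_1^{-1}g_2-\id\|\ge\varepsilon$, so any set of coset representatives is $\varepsilon$-separated in the compact group $U(n)$ and therefore has at most $J=J(n)$ elements, where $J(n)$ is the $\varepsilon$-covering number of $U(n)$, a quantity depending only on $n$.

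It then remains to show that $A$ contains an abelian subgroup of index bounded in terms of $n$, and here lies the main difficulty. The basic tool is the commutator contraction $\|[a,b]-\id\|\le 2\,\|a-\id\|\,\|b-\id\|$, which follows from the identity $[a,b]-\id=\bigl((a-\id)(b-\id)-(b-\id)(a-\id)\bigr)a^{-1}b^{-1}$ together with unitary invariance of the norm. Choosing $c\in A\setminus\{\id\}$ with $\|c-\id\|$ minimal, this estimate with $\varepsilon\le\tfrac12$ gives $\|[a,c]-\id\|<\|c-\id\|$ for every $a\in S$, whence $[a,c]=\id$ by minimality; as $A=\langle S\rangle$, the element $c$ is central in $A$. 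I would complete the argument by induction on $n$: being unitary of finite order, $c$ is diagonalizable with root-of-unity eigenvalues, and $A$ preserves its eigenspaces. If $c$ is not scalar these eigenspaces have dimension $<n$, and the faithful embedding of $A$ into the product of the induced unitary groups lets me pull back and intersect the abelian subgroups produced by the inductive hypothesis. The residual case, where $A$ acts irreducibly and its center is therefore scalar by Schur's lemma, is the genuine obstacle: here one must bound $|A/Z(A)|$ in terms of $n$ directly, which is the hard core of Jordan's original theorem. Finally, passing from an abelian subgroup of bounded index in $A$ to a \emph{normal} abelian subgroup of $G$ of bounded index costs only a further bounded factor, e.g.\ by replacing it with its normal core in $G$.

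For the first assertion I would argue arithmetically. Let $\OOO$ be the ring of integers of the number field $\Bbbk$. Given a finite subgroup $G\subset\GL_n(\Bbbk)$, the $\OOO[G]$-submodule $\Lambda\subset\Bbbk^n$ generated by the standard basis is a $G$-stable full lattice, so $G$ embeds into the automorphisms of $\Lambda$. Choosing an unramified prime ideal $\mathfrak p$ of $\OOO$ with odd residue characteristic $p$, reduction modulo $\mathfrak p$ gives a homomorphism $G\to\GL_n(\FF_q)$, where $q$ is the norm of $\mathfrak p$. The key point is that this reduction is injective on $G$: a nontrivial element $g$ of finite order with $g\equiv\id\pmod{\mathfrak p}$ would lie in the congruence kernel, which for $\mathfrak p$ unramified and $p$ odd is a torsion-free pro-$p$ group (as one sees via the $\mathfrak p$-adic logarithm); this torsion-freeness of the congruence subgroup is the crux of the argument. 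Injectivity then yields $|G|\le|\GL_n(\FF_q)|=\prod_{i=0}^{n-1}(q^n-q^i)$, and fixing once and for all a suitable $\mathfrak p$ depending only on $\Bbbk$ makes the right-hand side a constant $B=B(n,\Bbbk)$, as required.
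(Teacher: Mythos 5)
First, a remark on the comparison itself: the paper does not prove Theorem~\ref{theorem:Jordan} at all --- it is quoted as a classical result, with the Minkowski part attributed to \cite{Serre2007} and the Jordan part to \cite[Theorem~36.13]{Curtis-Reiner-1962} --- so your proposal can only be measured against the standard classical arguments. Your treatment of the Minkowski half is correct and is essentially the classical proof: spread $G$ over a $G$-stable full $\OOO$-lattice, reduce modulo a prime $\mathfrak p$ that is unramified of odd residue characteristic (such primes exist since only finitely many ramify, and one may be fixed once and for all depending only on $\Bbbk$), and use torsion-freeness of the congruence kernel to embed $G$ into $\GL_n(\FF_q)$. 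The reduction to $\CC$, the unitarization, the normality of $A=\langle S\rangle$, and the packing bound on $[G:A]$ in the Jordan half are all also correct.

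The Jordan half nevertheless contains a genuine gap, which you flag yourself: after showing that a minimal non-identity element of $A$ is central and setting up an induction on eigenspaces, you land in the case where $A$ acts irreducibly with scalar center and declare that there ``one must bound $|A/Z(A)|$ directly.'' That residual case is not easier than the original theorem --- your argument reduces Jordan's theorem to Jordan's theorem. The missing idea is Frobenius' commutation lemma: with $\varepsilon$ chosen so that $2\varepsilon<\sqrt2$ (your $\varepsilon=\tfrac12$ works), \emph{any two} elements of $S$ commute, so $A=\langle S\rangle$ is already abelian and no induction or irreducibility analysis is needed. To prove it, fix $a\in S$ and suppose some element of $S$ fails to commute with $a$; choose such a $b$ with $\|b-\id\|$ minimal. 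Then $c=[a,b]$ satisfies $\|c-\id\|\le 2\|a-\id\|\,\|b-\id\|<\|b-\id\|<\varepsilon$, so $c\in S$ and, by minimality, $c$ commutes with $a$. From $ca=ac$ and $aba^{-1}=cb$ one gets $a^mba^{-m}=c^mb$ for all $m$, whence $\|c^mb-\id\|=\|b-\id\|<\varepsilon$ and therefore $\|c^m-\id\|<2\varepsilon$ for every $m$. But a non-identity element of finite order in $U(n)$ has a root-of-unity eigenvalue $\lambda\ne1$, and some power satisfies $|\lambda^m-1|\ge\sqrt2$, so $\|c^m-\id\|\ge\sqrt2$ for some $m$ --- a contradiction forcing $c=\id$, i.e.\ $b$ commutes with $a$ after all. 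With this lemma in place your final remark about passing to the normal core becomes unnecessary, since $A$ itself is normal and abelian.
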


This leads to the following definition

\begin{definition}[{see e.g. \cite[Definition~1]{Popov-Diff}}]
\label{definition:Jordan}
We say that a group $\Gamma$
\emph{has bounded finite subgroups}
if there exists a constant $B=B(\Gamma)$ such that
for any finite subgroup
$G\subset\Gamma$ one has $|G|\leqslant B$.
A group~$\Gamma$ is called \emph{Jordan}
(alternatively, we say
that~$\Gamma$ \emph{has Jordan property})
if there is a constant~$J$ such that
for any finite subgroup $G\subset\Gamma$ there exists
a normal abelian subgroup $A\subset G$ of index at most~$J$.
\end{definition}

It was noticed by J.-P.\,Serre that Jordan property sometimes holds
for groups of birational automorphisms.

\begin{theorem}[{\cite[Theorem~5.3]{Serre2009}, \cite[Th\'eor\`eme~3.1]{Serre-2008-2009}}]
\label{theorem:Serre}
The group $\Bir(\P^2)$ over an arbitrary field of characteristic zero is Jordan.
\end{theorem}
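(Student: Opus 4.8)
The plan is to replace the abstract finite subgroup by a group acting biregularly on a surface, to bring that surface into a standard form, and then to bound the group on each standard form by a constant independent of the surface. So let $G\subset\Bir(\P^2)$ be finite. First I would \emph{regularize} the action: taking the closure of the graph of the finitely many maps $g\in G$, resolving its singularities equivariantly (equivariant resolution of singularities is available over any field of characteristic zero), and averaging the blow-ups over $G$, one obtains a smooth projective surface $S$ that is rational over $\Bbbk$, is $G$-equivariantly birational to $\P^2$, and carries a faithful biregular action of $G$. It suffices to bound the Jordan defect of $G$ acting on such an $S$.

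Next I would run the $G$-equivariant Minimal Model Program on $S$, also taking the Galois action into account. Contracting $G$-invariant extremal rays on a rational surface terminates at a $G$-Mori fibre space, and the usual case analysis leaves two possibilities: either $S$ is a del Pezzo surface with $\rk\Pic(S)^G=1$, or $S$ carries a $G$-equivariant conic bundle $\pi\colon S\to B$ over a conic $B$, with $\rk\Pic(S)^G=2$. A Jordan constant for $\Bir(\P^2)$ is then the maximum of the uniform constants obtained in these two cases. The recurring mechanism is that whenever $\Aut(S)$ is a linear algebraic group over $\Bbbk$, the subgroup $G$ embeds into its group of $\Bbbk$-points and the Jordan property follows from Theorem~\ref{theorem:Jordan}, whose constant does not depend on the field.

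In the del Pezzo case the degree $K_S^2$ takes one of the finitely many values $1,\dots,9$, and a suitable pluri-anticanonical linear system $|-mK_S|$, with $m$ depending only on the degree, gives a $G$-equivariant embedding $S\hookrightarrow\P^N$ with $N$ bounded. Hence $G$ embeds into $\PGL_{N+1}(\Bbbk)\subset\PGL_{N+1}(\overline{\Bbbk})$; lifting this projective representation to a finite subgroup of $\GL_{N+1}(\overline{\Bbbk})$ and applying Theorem~\ref{theorem:Jordan} bounds $G$ uniformly, since $N$ depends only on the finitely many possible degrees.

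The conic bundle case is the main obstacle. Here I would use the exact sequence
\[
1\longrightarrow G_F\longrightarrow G\longrightarrow G_B\longrightarrow 1,
\]
where $G_B\subset\Aut(B)$ is the image of $G$ on the base and $G_F$ acts trivially on $B$. Since $\Aut(B)\subset\PGL_3(\Bbbk)$ for a conic $B$, the quotient $G_B$ is Jordan by Theorem~\ref{theorem:Jordan}. The kernel $G_F$ acts faithfully on the generic fibre, a conic over $\Bbbk(B)$, so it embeds into $\PGL_2$ over that field; by the classification of finite subgroups of $\PGL_2$ in characteristic zero (cyclic, dihedral, $A_4$, $S_4$, $A_5$) it has a normal abelian subgroup of bounded index, and each such abelian subgroup has rank at most $2$. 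The delicate point is that one \emph{cannot} pass from the Jordan property of the two factors to that of $G$ by abstract group theory alone: an extension of a Jordan group by an abelian Jordan group of bounded rank need not be Jordan, as the extraspecial and Heisenberg groups show. Instead I would exploit the geometry of the fibration. After replacing $G$ by a subgroup of bounded index that fixes each of the finitely many degenerate fibres and preserves the two components of each, the fibrewise part is forced into a maximal torus of the relevant $\PGL_2$ and $G$ preserves enough structure (a section or an invariant bisection) to be realised inside the automorphism group of a $\P^1$-bundle, which is a linear algebraic group; Theorem~\ref{theorem:Jordan} then applies with a constant controlled by the bounded number of degenerate fibres. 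Assembling this with the del Pezzo bound yields the theorem.
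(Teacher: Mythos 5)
The paper itself does not prove this statement---it is quoted from Serre---so I am comparing your sketch with Serre's argument, whose overall architecture (regularization, $G$-equivariant MMP, separate treatment of del Pezzo surfaces and conic bundles) you reproduce correctly. The regularization step, the dichotomy, and the del Pezzo case are fine: the degree $K_S^2\in\{1,\dots,9\}$ is bounded, $|-3K_S|$ gives an equivariant embedding into a projective space of bounded dimension, and Theorem~\ref{theorem:Jordan} applies. You also correctly identify the genuine difficulty: an extension of an abelian group of bounded rank by an abelian group of bounded rank need not be Jordan with a uniform constant (Heisenberg groups), so neither part of a lemma like Lemma~\ref{lemma:group-theory} applies to the conic bundle sequence as it stands.

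However, your proposed resolution of the conic bundle case does not work. The number of degenerate fibres is $8-K_S^2$, which is \emph{unbounded}, so there is no subgroup of bounded index in $G$ preserving each degenerate fibre: once there are at least three degenerate fibres, the subgroup of $G_B\subset\Aut(B)$ fixing all of them pointwise is trivial, hence the subgroup of $G$ you pass to is essentially $G_F$, of index $|G_B|$, which is unbounded. Moreover, even for a subgroup fixing both components of every degenerate fibre, a conic bundle with singular fibres is not a $\P^1$-bundle, and contracting it to one requires a $G$- and Galois-invariant choice of one component in each degenerate fibre, which need not exist. The correct way out (Serre's) is group-theoretic but uses the specific structure of finite subgroups of forms of $\PGL_2$ rather than the abstract Jordan property: both $G_F$ and $G_B$ are cyclic, dihedral, $A_4$, $S_4$ or $A_5$, so each contains a normal cyclic (or Klein four) subgroup of index at most $60$; a cyclic subgroup $C\subset G_F$ of order at least $3$ lies in a unique maximal torus of $\PGL_2$ over $\Bbbk(B)$, so the conjugation action of $G$ on $C$ factors through the Weyl group $\ZZ/2\ZZ$; hence a subgroup of $G$ of bounded index is a central extension of a cyclic or Klein four group by a cyclic group, and such a group has an abelian subgroup of index at most $4$. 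Without an argument of this kind the conic bundle case, which you rightly call the main obstacle, remains open in your proof.
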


It appeared that one can generalize Theorem~\ref{theorem:Serre} to higher dimensions.

\begin{theorem}[{see \cite[Theorem~1.8]{Prokhorov-Shramov-2013} and~\cite[Theorem~1.1]{Birkar}}]
\label{theorem:Pr-Shr}
Let $X$ be a variety over a field of characteristic zero.
Then the following assertions hold.
\begin{itemize}
\item[(i)] If the irregularity of $X$ equals zero, then the group
$\Bir(X)$ is Jordan. In particular, this holds if $X$ is rationally
connected.

\item[(ii)] If $X$ is not uniruled, then the group
$\Bir(X)$ is Jordan.

\item[(iii)] If the irregularity of $X$ equals zero and $X$ is not uniruled,
then the group $\Bir(X)$ has bounded finite subgroups.
\end{itemize}
\end{theorem}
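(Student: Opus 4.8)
The plan is to analyze a finite subgroup $G\subset\Bir(X)$ by combining two $G$-equivariant fibrations — the Albanese map and a fibration produced by an equivariant Minimal Model Program — with the Minkowski--Jordan bounds of Theorem~\ref{theorem:Jordan}, and to induct on $\dim X$. First I would regularize: a finite group acting on the function field of $X$ acts biregularly on some smooth projective model, so after replacing $X$ by such a model we may assume $G\subset\Aut(X)$ with $X$ smooth projective, without changing $\Bir(X)$. Throughout I will use the elementary fact that the Jordan property is stable under extensions with bounded quotient: if $1\to N\to G\to Q\to 1$ with $N$ Jordan of constant $J_N$ and $|Q|\le M$, then intersecting the $G$-conjugates of a bounded-index abelian subgroup of $N$ produces a normal abelian subgroup of $G$ of index at most $J_N^{M}$; likewise boundedness of finite subgroups is stable under such extensions.

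Next, the Albanese map $\alb\colon X\to A:=\Alb(X)$ is functorial, hence $G$-equivariant, and yields a homomorphism $\rho\colon G\to\Aut(A)$. Writing $\Aut(A)=A\rtimes\Aut(A,0)$, the linear part $\Aut(A,0)$ embeds into $\GL\bigl(H_1(A,\ZZ)\bigr)\cong\GL_{2q}(\ZZ)$, where $q=\dim A$ is the irregularity of $X$; by the Minkowski half of Theorem~\ref{theorem:Jordan} its finite subgroups have order bounded in terms of $q$ alone. Thus $\rho(G)$ is an extension of a group of bounded order by the finite abelian group $\rho(G)\cap A$ of translations, so $\rho(G)$ is Jordan. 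Crucially, when $q=0$ we have $A=0$ and $\rho$ is trivial, whereas for $q>0$ the translation part can be arbitrarily large — this is exactly why the boundedness in~(iii) forces the hypothesis of vanishing irregularity.

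It remains to control the kernel $G_0=\ker\rho$, acting along the fibres of $\alb$, and here the two regimes split. If $X$ is \emph{not uniruled}, then $K_X$ is pseudoeffective and, after a $G$-equivariant MMP, we reduce to $K_X$ nef; every element of $\Bir(X)$ is then a pseudo-automorphism and acts on $H^{2}(X,\ZZ)$ (equivalently on the Néron--Severi lattice after passing to a minimal model) preserving the Hodge structure and a polarization. The image of $G$ in $\GL\bigl(H^{2}(X,\ZZ)\bigr)$ again has bounded finite subgroups by Minkowski, and the kernel of this action is bounded by a rigidity argument once $q(X)=0$; combined with the Albanese step this gives boundedness in~(iii) and the Jordan property in~(ii) in general. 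If instead $X$ \emph{is} uniruled, the equivariant MMP terminates in a $G$-Mori fibre space $Y\to S$ with $\dim S<\dim X$ whose general fibre $F$ is a Fano variety with terminal singularities. Here $G$ maps to $\Aut(S)\subset\Bir(S)$, and the base $S$ of a Mori fibre space has $q(S)=0$, so $\Bir(S)$ is Jordan by the inductive hypothesis (case~(i) in lower dimension); the kernel injects into $\Bir(F)$, and one splices the two contributions by the extension lemma. Assembling the pieces: case~(i) follows from the uniruled branch (when $X$ is uniruled, e.g.\ rationally connected) or the non-uniruled branch (e.g.\ for a $K3$ surface), case~(ii) from the non-uniruled branch, and case~(iii) from running both with $q=0$ so that all abelian and translation contributions vanish.

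The main obstacle — and the reason the statement became a theorem only after~\cite{Birkar} — is the boundedness of the fibre contribution in the uniruled case: one needs that $\Bir(F)$ for a Fano variety $F$ with terminal singularities of fixed dimension is Jordan with a constant depending only on $\dim F$. This is precisely what boundedness of Fano varieties (the BAB theorem) supplies, via boundedness of the family of such $F$ and hence of the finite groups acting on them. Feeding this input into the equivariant MMP and the induction on dimension, while keeping the singularities in a class (terminal, $\QQ$-factorial) to which boundedness applies at every step, is the technical heart of the argument; the Albanese and Minkowski inputs above are comparatively formal, and the weight of the proof rests on this equivariant MMP together with Fano boundedness.
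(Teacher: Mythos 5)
The paper does not prove Theorem~\ref{theorem:Pr-Shr} at all: it is imported as a black box from \cite{Prokhorov-Shramov-2013} together with \cite{Birkar}, so there is no internal argument to measure your proposal against. That said, your sketch is a broadly faithful reconstruction of the strategy of those references: regularization of finite subgroups, extension lemmas of the type recorded here as Lemma~\ref{lemma:group-theory}, Minkowski's bound applied to the action on $H_1(\Alb(X),\ZZ)$ and on the N\'eron--Severi lattice, an equivariant MMP, and Birkar's boundedness of Fano varieties to control the fibre contribution --- you correctly identify the last item as the reason the statement only became unconditional after \cite{Birkar}. Three corrections. First, \cite{Prokhorov-Shramov-2013} organizes case~(i) around the maximal rationally connected fibration $X\dasharrow X_{\mathrm{nu}}$ rather than the Albanese map: the kernel acts on the rationally connected generic fibre (Jordan by the rationally connected case, where BAB enters), while the image lies in $\Bir(X_{\mathrm{nu}})$ with $X_{\mathrm{nu}}$ non-uniruled of irregularity zero, hence with bounded finite subgroups by~(iii) in lower dimension; this is precisely the splicing the present paper repeats in \S\ref{section:threefolds}. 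Second, your assertion that ``the base $S$ of a Mori fibre space has $q(S)=0$'' is false as a general statement (a conic bundle over an abelian surface is a Mori fibre space); it holds in your situation only because $q(X)=0$ and $H^1(S,\OOO_S)$ injects into $H^1(X,\OOO_X)$ for a fibration with connected fibres, and this needs to be said, since otherwise your uniruled branch would appear to prove the false claim that $\Bir(X)$ is Jordan for every uniruled $X$. Third, in your extension lemma the intersection of the at most $|Q|$ conjugates of $A$ has index at most $J_N^{|Q|}$ in $N$, so the normal abelian subgroup has index at most $|Q|\cdot J_N^{|Q|}$ in $G$, not $J_N^{|Q|}$; this is harmless but should be fixed. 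With these caveats, your outline is an acceptable summary of a proof that lives entirely outside this paper.
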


Finite subgroups of birational automorphism groups of projective spaces
and some other varieties
were intensively studied from the point of view of their boundedness,
see e.g.~\cite{Prokhorov-Shramov-JCr3}, \cite{Prokhorov-Shramov-p-groups}, \cite{Yasinsky2016a},
and references therein.
However, Theorem~\ref{theorem:Pr-Shr} tells us nothing about
varieties that have a structure of a conic bundle over,
say, an abelian variety. To treat this case T.\,Bandman and Yu.\,Zarhin
proved the following result.

\begin{theorem}[{\cite[Corollary 4.11]{BandmanZarhin2015a}}]
\label{theorem:Zarhin-conic}
Let $\KK$ be a field of characteristic zero
containing all roots of~$1$. Let $C$ be a conic over $\KK$.
Assume that $C$ is not $\KK$-rational, i.e.
that~\mbox{$C(\KK)=\varnothing$}. Then any finite subgroup of~\mbox{$\Aut(C)$}
has order at most~$4$.
\end{theorem}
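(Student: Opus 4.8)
The plan is to translate the problem into the language of quaternion algebras and then use the hypothesis on roots of unity to bound element orders. Since $C$ is a conic with $C(\KK)=\varnothing$, it is a nontrivial form of $\PP^1_{\KK}$, hence the Severi--Brauer variety of a quaternion \emph{division} algebra $Q$ over $\KK$; concretely $Q$ is the quaternion algebra whose associated anisotropic ternary norm form cuts out $C$. I would use the standard identification $\Aut(C)\cong Q^{\times}/\KK^{\times}$ (the $\KK$-points of $\operatorname{PGL}_1(Q)$), so that it suffices to bound finite subgroups of $Q^{\times}/\KK^{\times}$. Two structural facts about $Q$ will be used repeatedly: every $x\in Q\setminus\KK$ generates a quadratic field $\KK(x)$, on which quaternion conjugation induces the nontrivial element $\sigma$ of $\Gal(\KK(x)/\KK)$; and the space $Q_0$ of reduced-trace-zero elements is a $3$-dimensional nondegenerate quadratic space for the reduced norm, with $x^2=-\operatorname{Nrd}(x)\in\KK$ for every $x\in Q_0$.

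The crux is to show that every element of finite order in $Q^{\times}/\KK^{\times}$ has order at most $2$. Given such an element $g$ of order $n$, lift it to $\tilde g\in Q^{\times}\setminus\KK$ with $\tilde g^{\,n}\in\KK^{\times}$, and set $L=\KK(\tilde g)$ with $\sigma$ as above. Since $\tilde g^{\,n}\in\KK^{\times}$ is fixed by $\sigma$, the element $\eta:=\sigma(\tilde g)\,\tilde g^{-1}$ satisfies $\eta^{\,n}=1$, so $\eta$ is a root of unity and therefore lies in $\KK^{\times}$ by hypothesis. Applying $\sigma$ to the defining relation gives $\sigma(\eta)=\eta^{-1}$, whereas $\eta\in\KK$ forces $\sigma(\eta)=\eta$; hence $\eta^2=1$. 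The value $\eta=1$ would give $\tilde g\in\KK$, which is excluded, so $\eta=-1$, i.e. $\operatorname{Trd}(\tilde g)=0$ and $\tilde g^2\in\KK^{\times}$, whence $g$ has order $2$. Consequently any finite subgroup $G\subset Q^{\times}/\KK^{\times}$ is an elementary abelian $2$-group.

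It remains to bound the rank of $G$, and here I would argue inside $Q_0$. By the previous step each nontrivial element of $G$ admits a trace-zero lift, giving a well-defined anisotropic line in $Q_0$, and distinct elements give distinct lines. For two distinct nontrivial $g,h\in G$ with trace-zero lifts $u,v$, the product $gh$ again has order $2$, so $uv\notin\KK$ while $(uv)^2\in\KK^{\times}$; this forces $\operatorname{Trd}(uv)=0$, which says exactly that $u$ and $v$ are orthogonal for the norm bilinear form. Hence the nontrivial elements of $G$ correspond to pairwise orthogonal anisotropic lines in the $3$-dimensional space $Q_0$; such lines are linearly independent, so there are at most three of them, giving $|G|=2^{k}\le 4$. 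The bound is sharp, realized by $\{1,[i],[j],[k]\}$ for a standard quaternion basis. I expect the main obstacle to be the second step: extracting from the root-of-unity hypothesis first that $\eta\in\KK$ and then that $\eta=\pm1$, which is precisely where the assumption that $\KK$ contains all roots of unity is indispensable. The opening identification $\Aut(C)\cong Q^{\times}/\KK^{\times}$ also requires care, while the final orthogonality count is routine linear algebra.
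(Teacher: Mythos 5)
Your argument is correct and complete. Note that the paper itself gives no proof of this statement---it is quoted verbatim from \cite[Corollary~4.11]{BandmanZarhin2015a}---but your route is essentially the one taken there: identify $\Aut(C)$ with $Q^{\times}/\KK^{\times}$ for the underlying quaternion division algebra $Q$, use the roots-of-unity hypothesis to force every finite-order class to have a reduced-trace-zero lift (hence order $2$), and then bound the number of pairwise orthogonal anisotropic lines in the $3$-dimensional trace-zero subspace. All the individual steps check out, including the two you flag as delicate: the identification $\Aut(C)\cong Q^{\times}/\KK^{\times}$ is the standard Ch\^atelet description of automorphisms of a Severi--Brauer conic, and the computation $\sigma(\eta)=\eta^{-1}$ combined with $\eta\in\KK$ correctly yields $\eta=-1$.
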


The main purpose of this note
is to prove a result
which is a two-dimensional counterpart of Theorem~\ref{theorem:Zarhin-conic}.

\begin{theorem}\label{theorem:rational-surface-vs-BFS}
Let $\KK$ be a field of characteristic zero
containing all roots of~$1$, and $S$ be a geometrically rational surface over $\KK$.
Assume that $S$ is not $\KK$-rational but has a smooth $\KK$-point.
Then the group $\Bir(S)$ has bounded finite subgroups.
\end{theorem}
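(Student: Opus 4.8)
The plan is to study finite subgroups $G\subset\Bir(S)$ by means of $G$-equivariant minimal model theory. First I would regularize: every finite $G\subset\Bir(S)$ acts biregularly on some smooth projective surface $S'$ that is $\KK$-birational to $S$, and running the $G$-equivariant minimal model program produces a $G$-Mori fibre space. Thus $S'$ is either a del Pezzo surface on which $G$ acts minimally (the rank of $(\Pic S')^G$ is one), or a conic bundle $\pi\colon S'\to C$ with $(\Pic S')^G$ of rank two. Since $S$ has a smooth $\KK$-point, the Lang--Nishimura lemma guarantees that $S'$ also has a $\KK$-point; in the conic bundle case its image under $\pi$ is a $\KK$-point of $C$, so $C$ is a genus-zero curve with a rational point and hence $C\cong\PP^1$. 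Because $S$ is not $\KK$-rational, neither is the birational model $S'$.

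Next I would dispose of the del Pezzo case. A del Pezzo surface of degree at least $5$ carrying a $\KK$-point is $\KK$-rational: a Severi--Brauer surface or a quadric with a point is rational, and del Pezzo surfaces of degrees $5,6,7$ with a point are rational as well. Since this would contradict the non-rationality of $S$, the degree of $S'$ must be at most $4$. For such surfaces $\Aut(S')\subset\Aut(S'_{\bar\KK})$ is finite of order bounded by a constant depending only on the degree, so $|G|$ is bounded by a universal constant in the del Pezzo case.

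The remaining, and main, case is the conic bundle $\pi\colon S'\to\PP^1$. Here I would use the exact sequence $1\to G_F\to G\to G_C\to 1$, where $G_C\subset\Aut(\PP^1)=\PGL_2(\KK)$ records the action on the base and $G_F$ acts fibrewise, that is, by $\KK(t)$-automorphisms of the generic fibre. If that generic fibre were a split conic over $\KK(t)$, then $S'$ would be $\KK$-birational to $\PP^1\times\PP^1$ and hence $\KK$-rational; so non-rationality forces the generic fibre to be a non-split conic. As $\KK(t)$ contains all roots of unity, Theorem~\ref{theorem:Zarhin-conic} applied over $\KK(t)$ gives $|G_F|\le 4$. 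To control $G_C$ I would use that every element of $G_C$ preserves the discriminant locus $\Delta\subset\PP^1$ of the conic bundle; since a nontrivial element of $\PGL_2$ fixes at most two points, the group $G_C$ acts faithfully on $\Delta$ as soon as $\Delta$ has at least three geometric points, whence $|G_C|\le(\deg\Delta)!$.

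The hard part will be making the bound on $G_C$ uniform over all $G$, since a priori the $G$-minimal conic bundle model, and with it the size of $\Delta$, depends on $G$. I expect to resolve this by fixing a $\KK$-minimal conic bundle model $S_{\min}\to\PP^1$ of $S$ and invoking Iskovskikh's birational rigidity for non-rational conic bundles: when the number of degenerate fibres is sufficiently large the conic bundle structure is unique up to automorphisms of the base, so the homomorphism $\Bir(S)=\Bir(S_{\min})\to\PGL_2(\KK)$ is well defined, its kernel consists of fibrewise maps bounded by Theorem~\ref{theorem:Zarhin-conic}, and its image preserves the fixed finite set $\Delta$ attached to $S_{\min}$. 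Conic bundles with few degenerate fibres lie outside the rigidity range, and I would check separately that those carrying a $\KK$-point and failing to be $\KK$-rational are already accounted for by the del Pezzo analysis; reconciling these thresholds with Iskovskikh's rationality criterion is where the main technical care is required.
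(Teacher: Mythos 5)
Your overall architecture coincides with the paper's: regularize, pass to a $G$-minimal model, split into the del Pezzo case (bounded automorphisms once $K^2\le 4$, which follows from non-rationality plus a $\KK$-point via the rationality criterion of Theorem~\ref{rationality-criterion}) and the conic bundle case, bound the fibrewise part by Theorem~\ref{theorem:Zarhin-conic} applied to the non-split generic fibre, and bound the base action through its permutation action on the discriminant. The genuine gap is precisely the step you defer as ``the main technical care'': making the bound on $G_C$ independent of $G$. Your plan for that step has two problems. First, the claim that non-rational conic bundles with few degenerate fibres are ``accounted for by the del Pezzo analysis'' is wrong: a relatively minimal non-rational conic bundle with a $\KK$-point has $K^2\le 4$, i.e.\ between $4$ and $7$ degenerate geometric fibres when $K^2\ge 1$, and such a surface need not be a del Pezzo surface. (This error is harmless, though, because in that range $4\cdot m!\le 4\cdot 7!$ is already an absolute constant and no birational invariance is needed at all.) Second, for $K^2\le 0$ you invoke a rigidity statement stronger than what you verify: you need not only that the conic bundle structure is unique up to an automorphism of the base, but also that the induced element of $\PGL_2(\KK)$ preserves the discriminant of a \emph{fixed} relatively minimal model, and that the $G$-minimal model (which need not be relatively minimal once the group action is discarded, so its discriminant genuinely depends on $G$) can be compared with that fixed model.

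The paper resolves this with a more elementary device that you are missing: partition the degenerate geometric fibres of the $G$-minimal conic bundle into those whose two components are $\Gal(\bar\KK/\KK)$-conjugate and those whose components are not; this partition is $G$-stable, so $G_B$ preserves the image $\Delta'$ of the first set. Contracting one component in each fibre of the second set (an MMP over $B$ without group action) yields a relatively minimal conic bundle $S'/B$ whose discriminant is exactly $\Delta'$, with $|\bar\Delta'|=8-K_{S'}^2\ge 4$ by Theorem~\ref{rationality-criterion}. Then $|G|\le 4\,(8-K_{S'}^2)!$, and this is uniform because either $K_{S'}^2\ge 1$ (absolute bound) or $K_{S'}^2\le 0$, in which case Theorem~\ref{k-conic-bundles} --- only the invariance of $K^2$, not full rigidity --- shows $K_{S'}^2$ depends only on $S$. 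So your proposal is salvageable, but as written the decisive uniformity argument is absent and the sketch of it contains a misidentification of which cases are problematic.
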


The assumption that $S$ has a $\KK$-point is crucial for Theorem~\ref{theorem:rational-surface-vs-BFS}.
Indeed, if $\KK$ is a field of characteristic zero
containing all roots of~$1$, and $C$ is a conic over $\KK$ without $\KK$-points,
then the surface $S=\P^1\times C$ has no $\KK$-points
(and thus is not $\KK$-rational), but the group~\mbox{$\Aut(S)$} contains arbitrarily large
finite cyclic subgroups.

It is known (see~\cite{Zarhin10}, or Theorem~\ref{theorem:Zarhin-AxP1} below) that there are surfaces
whose birational automorphism groups are not Jordan; they are birational to products $E\times\P^1$,
where $E$ is an elliptic curve.
The following result of V.\,Popov classifies surfaces with non Jordan birational automorphism
groups.

\begin{theorem}[{\cite[Theorem~2.32]{Popov2011}}]
\label{theorem:Popov}
Let $S$ be a surface over an algebraically closed 
field of characteristic zero.
Then the group 
$\Bir(S)$ of birational automorphisms of~$S$
is not Jordan if and only if~$S$ is 
birational to~\mbox{$E\times\P^1$}, where $E$ is an
elliptic curve. 
\end{theorem}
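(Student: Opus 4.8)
The plan is to exploit the fact that $\Bir(S)$ depends only on the birational class of $S$, and to run through the Enriques classification of surfaces over the (algebraically closed, characteristic zero) base field $\Bbbk$. The ``if'' direction is immediate: if $S$ is birational to $E\times\P^1$, then $\Bir(S)\cong\Bir(E\times\P^1)$ fails to be Jordan by Theorem~\ref{theorem:Zarhin-AxP1}. For the ``only if'' direction I must show that every surface \emph{not} birational to such a product has Jordan birational automorphism group, and here I split according to Kodaira dimension. If $\kappa(S)\ge 0$, then $S$ is not uniruled, so $\Bir(S)$ is Jordan by Theorem~\ref{theorem:Pr-Shr}(ii). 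It remains to treat the uniruled case $\kappa(S)=-\infty$, in which $S$ is birational to $C\times\P^1$ for a smooth projective curve $C$; the product is rational exactly when $g(C)=0$ and birational to $E\times\P^1$ exactly when $g(C)=1$.

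When $g(C)=0$ the surface is rational, so $\Bir(S)\cong\Bir(\P^2)$ is Jordan by Theorem~\ref{theorem:Serre}. Assume now $g(C)\ge 1$ and set $S=C\times\P^1$ with projection $p\colon S\to C$. The key structural observation is that $p$ is canonical: since $C$ carries no rational curves, the image of a general fiber of $p$ under any birational self-map $\phi\colon S\dashrightarrow S$ is a rational curve whose projection to $C$ is constant, hence lies in a fiber, and applying the same to $\phi^{-1}$ shows $\phi$ carries fibers to fibers. Thus $\phi$ descends to a birational self-map of $C$, which is an automorphism since $C$ is a smooth projective curve. This produces a homomorphism $\Bir(S)\to\Aut(C)$ whose kernel consists of the birational self-maps over $C$, i.e.\ the automorphisms of the generic fiber $\P^1_{\Bbbk(C)}$, giving an exact sequence
\[
1\longrightarrow\PGL_2\bigl(\Bbbk(C)\bigr)\longrightarrow\Bir(S)\longrightarrow\Aut(C).
\]

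The kernel $\PGL_2(\Bbbk(C))$ is Jordan: every finite subgroup embeds into $\PGL_2(\overline{\Bbbk(C)})$, and finite subgroups there are cyclic, dihedral, or isomorphic to $A_4$, $S_4$, or $A_5$, each containing an abelian subgroup of bounded index (alternatively this follows from the Jordan property of $\GL_2$ in Theorem~\ref{theorem:Jordan}). If $g(C)\ge 2$, then $\Aut(C)$ is finite, so the image of the quotient map has bounded finite subgroups, and I invoke the standard principle that an extension of a group with bounded finite subgroups by a Jordan group is again Jordan (proved by intersecting over the finitely many conjugates of the abelian subgroup supplied by the kernel); this shows $\Bir(S)$ is Jordan and completes the ``only if'' direction. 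When $g(C)=1$, by contrast, $\Aut(E)$ contains the translation subgroup $E$, whose torsion points furnish arbitrarily large finite subgroups, and the resulting failure of the Jordan property is precisely the content of Theorem~\ref{theorem:Zarhin-AxP1}, in agreement with the excluded case.

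The main obstacle is establishing the exactness above rigorously: I must verify that the $\P^1$-fibration is preserved by \emph{all} of $\Bir(S)$ rather than merely by $\Aut(S)$ --- this rests on the absence of rational curves in $C$ for $g(C)\ge 1$ --- and that the kernel is exactly $\PGL_2(\Bbbk(C))$. The remaining work, namely the extension principle and the uniform control of the index of the abelian subgroup across the sequence, is elementary once the classification of surfaces is in hand, so no single step is deep, but the bookkeeping in the uniruled case is where the care is required.
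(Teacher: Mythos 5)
Your argument is correct, but note that the paper itself offers no proof of this statement: it is quoted as Popov's theorem and used as a black box, so there is no in-paper argument to compare against. What you have written is a legitimate reconstruction from the other results the paper quotes: the case $\kappa(S)\ge 0$ is dispatched by Theorem~\ref{theorem:Pr-Shr}(ii), the rational case by Theorem~\ref{theorem:Serre}, and for $S$ birational to $C\times\P^1$ with $g(C)\ge 2$ you combine the $\Bir(S)$-equivariance of the ruling (valid precisely because $C$ carries no rational curves, so this is the maximal rationally connected fibration), the identification of the kernel with $\Aut(\P^1_{\Bbbk(C)})=\PGL_2(\Bbbk(C))$ and the classical list of its finite subgroups, the finiteness of $\Aut(C)$, and the extension principle, which is exactly Lemma~\ref{lemma:group-theory}(i) of the paper. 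The ``if'' direction and the consistency check at $g(C)=1$ via Theorem~\ref{theorem:Zarhin-AxP1} are also sound, since over an algebraically closed field all torsion points of $E$ are rational. The only remark worth making is chronological rather than mathematical: Popov's original proof predates Theorem~\ref{theorem:Pr-Shr} and handles the non-uniruled surfaces case by case through the Enriques classification (finiteness of $\Aut$ for general type, and explicit control of $\Aut$ for K3, Enriques, abelian, bielliptic and properly elliptic surfaces), whereas your route is shorter but leans on that later and much heavier input; given that the paper already assumes Theorem~\ref{theorem:Pr-Shr}, this is a perfectly acceptable trade-off here.
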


Applying Theorems~\ref{theorem:Zarhin-conic}
and~\ref{theorem:rational-surface-vs-BFS}, we can immediately obtain a
partial generalization of Theorem~\ref{theorem:Popov} to dimension~$3$, proving that
a threefold with a non Jordan group of birational automorphisms must be birational
to a product of~$\P^1$ and some surface. However, some
additional information about automorphism groups of non uniruled
surfaces allows us to give a complete
classification of treefolds with non Jordan birational automorphism groups.
Namely, we prove the following.

\begin{theorem}\label{theorem:dim-3}
Let $X$ be a threefold over an algebraically
closed field of characteristic zero. 
Then the group $\Bir(X)$ is not Jordan
if and only if~$X$ is birational either to $E\times\P^2$,
where $E$ is an elliptic curve, or to $S\times\P^1$, where $S$
is one of the following:
\begin{itemize}
\item
an abelian surface;

\item
a bielliptic surface;

\item
a surface of Kodaira dimension $1$ such that the Jacobian fibration
of the pluricanonical fibration $\phi\colon S\to B$ is locally trivial
\textup(in Zariski topology\textup).
\end{itemize}
\end{theorem}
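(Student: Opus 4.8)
The plan is to prove both directions of Theorem~\ref{theorem:dim-3} by reducing, via the maximal rationally connected fibration, to products with projective spaces, and then invoking the surface-level inputs together with a theta-group (Heisenberg) construction.

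\emph{Sufficiency.} First I would exhibit non-Jordan finite subgroups for each variety on the list. For $E\times\P^2$ I would use the birational equivalence $\P^2\sim\P^1\times\P^1$, so that $E\times\P^2\sim(E\times\P^1)\times\P^1$ and hence $\Bir(E\times\P^2)\supseteq\Bir(E\times\P^1)$, which is non-Jordan by Theorem~\ref{theorem:Zarhin-AxP1}. For $S\times\P^1$ the common mechanism is that in each of the three cases $\Aut(S)$ contains arbitrarily large finite abelian groups of \emph{torsion translations} carrying a nondegenerate alternating $\mu_n$-valued (Weil) pairing: the full $n$-torsion $S[n]\cong(\ZZ/n)^4$ when $S$ is abelian, the $n$-torsion $(\ZZ/n)^2$ of the elliptic curve $\Aut^0(S)$ when $S$ is bielliptic, and the constant torsion sections $(\ZZ/n)^2$ of the Jacobian fibration when the latter is locally trivial. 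A Zarhin-type theta construction then lifts each such abelian group to a finite Heisenberg group in $\Bir(S\times\P^1)$ whose commutator lands in the central $\mu_n\subset\PGL_2(\CC(S))$; as $n\to\infty$ the minimal index of an abelian subgroup is unbounded, so $\Bir(S\times\P^1)$ is not Jordan.

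\emph{Necessity.} Assume $\Bir(X)$ is not Jordan. By Theorem~\ref{theorem:Pr-Shr}(i),(ii) the variety $X$ is uniruled and $q(X)\ge1$. Let $X\dashrightarrow Z$ be the maximal rationally connected fibration; it is canonical, hence $\Bir(X)$-equivariant, its base $Z$ is non-uniruled, and since rationally connected fibres carry no irregularity one has $\Alb(X)=\Alb(Z)$, so $q(Z)=q(X)\ge1$ and $\dim Z\in\{1,2\}$. Writing $K=\CC(Z)$ and letting $X_\eta$ be the generic fibre, equivariance gives an exact sequence $1\to N\to\Bir(X)\to\Gamma\to1$ with $\Gamma\subseteq\Bir(Z)$ and $N\subseteq\Bir_K(X_\eta)$. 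I would repeatedly use two standard group-theoretic facts: that a group with a normal subgroup of bounded finite subgroups and Jordan quotient is Jordan, and that a group with a Jordan normal subgroup of finite index is Jordan; together with the facts that $\PGL_n(K)$ and $\Cr_2(K)=\Bir(\P^2_K)$ are Jordan (by Theorem~\ref{theorem:Jordan} applied over $\overline K$, and by Theorem~\ref{theorem:Serre}), and that $\Gamma\subseteq\Bir(Z)$ is Jordan by Theorem~\ref{theorem:Pr-Shr}(ii) since $Z$ is non-uniruled.

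\emph{The base curve case.} If $\dim Z=1$ then $Z$ is a curve of genus $g=q(X)\ge1$ and $X_\eta$ is a geometrically rational surface over $K$; by Graber--Harris--Starr it has a $K$-point, and $K$ contains all roots of unity. If $X_\eta$ is not $K$-rational then $N$ has bounded finite subgroups by Theorem~\ref{theorem:rational-surface-vs-BFS}, so $\Bir(X)$ is Jordan, contrary to assumption. Hence $X_\eta\cong\P^2_K$, $X\sim Z\times\P^2$, and $N=\Cr_2(K)$ is Jordan; if $g\ge2$ then $\Aut(Z)$ is finite and $\Bir(X)$ is a finite extension of a Jordan group, hence Jordan, so $g=1$ and $X\sim E\times\P^2$. \emph{The base surface case, non-elliptic part.} If $\dim Z=2$ then $X_\eta$ is a conic over $K$; if it has no $K$-point, Theorem~\ref{theorem:Zarhin-conic} bounds the finite subgroups of $N=\Aut_K(X_\eta)$ and again $\Bir(X)$ is Jordan. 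So $X_\eta\cong\P^1_K$ and $X\sim Z\times\P^1$. By Enriques--Kodaira classification the non-uniruled surface $Z$ with $q(Z)\ge1$ has $\kappa(Z)\in\{0,1,2\}$. If $\kappa(Z)=2$ then $\Aut$ of its minimal model is finite, so $\Bir(Z)$ is finite and $\Bir(X)$ is Jordan; this case is excluded. If $\kappa(Z)=0$ and $q(Z)\ge1$ then $Z$ is abelian or bielliptic, both on the list.

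\emph{The elliptic case and the main obstacle.} The remaining and hardest case is $\kappa(Z)=1$ with $q(Z)\ge1$, where $Z$ carries its pluricanonical elliptic fibration $\phi\colon Z\to B$ with Jacobian fibration $J\to B$. Here the finite subgroups of $\Bir(Z\times\P^1)$ lying over a given abelian $A\subseteq\Aut(Z)$ are central extensions of $A$ by $\mu_m\subset\PGL_2(K)$, and non-Jordanness is equivalent to the existence of such $A$ of unbounded order carrying a nontrivial (Heisenberg) theta pairing. The key point I must establish is that the abelian subgroups producing a nontrivial pairing are exactly the groups of translations by torsion sections of $J\to B$, that these are unbounded precisely when $J\to B$ is locally trivial in the Zariski topology, and that otherwise the torsion sections are bounded and the corresponding extensions split after passing to a bounded-index subgroup, so that $\Bir(Z\times\P^1)$ is Jordan. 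I expect this step---relating the Heisenberg obstruction for $Z\times\P^1$ to the geometry of the Jacobian elliptic fibration (the structure of its torsion multisections, the role of multiple fibres and of monodromy, and the precise nondegeneracy of the Weil pairing needed to force a genuine Heisenberg group rather than an abelian lift)---to be the main difficulty, since it combines the theory of elliptic surfaces with a group-cohomological computation of the lift, whereas the preceding reductions are formal consequences of Theorems~\ref{theorem:Jordan}, \ref{theorem:Serre}, \ref{theorem:Pr-Shr}, \ref{theorem:Zarhin-conic}, and~\ref{theorem:rational-surface-vs-BFS}.
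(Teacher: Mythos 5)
Your overall strategy---reducing via the maximal rationally connected fibration to $Z\times\P^2$ over a curve or $Z\times\P^1$ over a non-uniruled surface, and feeding in Theorems~\ref{theorem:Zarhin-conic}, \ref{theorem:rational-surface-vs-BFS} and~\ref{theorem:Zarhin-AxP1}---is exactly the paper's, but there are two genuine gaps. First, your ``standard group-theoretic fact'' that an extension of a Jordan group by a group with bounded finite subgroups is Jordan is false as stated: take the central product over $\ZZ/2$ of the extraspecial groups $2^{1+2k}$ for all $k$; this is an extension of the abelian (hence Jordan) group $\bigoplus_k(\ZZ/2)^{2k}$ by $\ZZ/2$, yet it contains every $2^{1+2k}$, whose minimal abelian index is $2^k$, so it is not Jordan. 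The correct statement, Lemma~\ref{lemma:group-theory}(ii), requires in addition that the quotient have finite subgroups of bounded rank. Your applications survive only because in each of them the quotient embeds into $\Bir(Z)$ for a non-uniruled $Z$, which has finite subgroups of bounded rank by Proposition~\ref{proposition:non-uniruled}; this hypothesis must be invoked explicitly, since without it the implication you use does not hold.

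Second, and more seriously, the Kodaira dimension $1$ case---which you yourself flag as ``the main difficulty''---is described but not proved. The missing ingredient is precisely Lemma~\ref{lemma:Kodaira-dimension-1}: for a minimal surface $Z$ of Kodaira dimension $1$ with pluricanonical fibration $\phi\colon Z\to B$, the group $\Aut(Z)$ has unbounded finite subgroups if and only if the Jacobian fibration $\phi_J$ is locally trivial. The paper proves this by (a) applying the Lang--N\'eron theorem (Theorem~\ref{theorem:MW}) to bound the torsion of the Mordell--Weil group when $\phi_J$ is not locally trivial, and (b) showing that the image of $\Aut(Z)$ in $\Aut(B)$ always has bounded finite subgroups, using the canonical bundle formula to produce at least three multiple fibres when $g(B)=0$ and the classification of surfaces with no degenerate fibres when $g(B)=1$. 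Once this dichotomy is available, both directions are cheap: if $\Aut(Z)$ has bounded finite subgroups, then $\Bir(Z\times\P^1)$ is Jordan by Lemma~\ref{lemma:group-theory}(i) (Jordan kernel $\PGL_2(\Bbbk(Z))$, bounded quotient); if $\phi_J$ is locally trivial, one passes to the generic fibre of $Z\times\P^1\to B$, which is $\mathcal{E}'\times\P^1$ for a torsor $\mathcal{E}'$ over an elliptic curve with all torsion rational over $\Bbbk(B)$, and applies Theorem~\ref{theorem:Zarhin-AxP1} verbatim (this is Corollary~\ref{corollary:not-Jordan}). The same device handles the bielliptic case of your sufficiency argument more cleanly than a global theta construction on $S$: you never need to verify nondegeneracy of a commutator pairing on the surface itself, nor the ``exact characterization of the abelian subgroups carrying a nontrivial pairing'' that your outline calls for, which is stronger than what the theorem requires.
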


\begin{remark}
Let $S$ be a smooth minimal surface of Kodaira
dimension $1$, and $\phi\colon S\to B$
be its pluricanonical fibration.
Suppose that the corresponding Jacobian fibration is locally trivial. 
If $\phi$ has a section, then it is locally trivial itself.
In particular, this implies that $g(B)\ge 2$. However, if $\phi$
does not have a section, then the genus of $B$ may be arbitrary, see
Example~\ref{example:arbitrary-base} below.
\end{remark}

The plan of the paper is as follows.
In \S\ref{section:rational-surfaces} we prove Theorem~\ref{theorem:rational-surface-vs-BFS}.
In \S\ref{section:non-rational-surfaces}
we collect some auxiliary facts about automorphism groups
of minimal non uniruled surfaces. Finally, in~\S\ref{section:threefolds} we prove
Theorem~\ref{theorem:dim-3}.

\smallskip
We are grateful to S.\,Gorchinskiy, S.\,Rybakov, K.\,Oguiso,
and L.\,Taelman for useful discussions.

\smallskip
Until the end of the paper all varieties are assumed to be projective
and defined over an algebraically closed
field $\Bbbk$ of characteristic zero if the converse is not stated explicitly.

\section{Geometrically rational surfaces}
\label{section:rational-surfaces}

In this section we
prove Theorem~\ref{theorem:rational-surface-vs-BFS}. Until the end of the section we
always assume that $\KK$ is a field of characteristic zero
that contains all roots of~$1$.

Recall that a Fano--Mori model $S/B$ of a surface~$\bar{S}$ is a smooth surface
$S$ birational to~$\bar{S}$ endowed with a morphism
$\phi\colon S\to B$ with connected fibers, where $B$ is either a curve
or a point, and the relative Picard rank~\mbox{$\mathrm{rk\,Pic}(S/B)$} equals~$1$.

The following results were proved in a series of works \cite{Manin-Cubic-forms-e-I}, \cite{Iskovskih1967e},
\cite{Iskovskikh-1970}, \cite{Iskovskikh-deg4-1973}, \cite{Iskovskikh-1979s-e};
see \cite{Iskovskikh-Factorization-1996e} for the modern approach.

\begin{theorem}\label{rationality-criterion}
Let $S/B$ be a Fano--Mori model of a geometrically rational surface over a field $\KK$.
Assume that $S(\KK)\neq \varnothing$.
Then $S$ is $\KK$-rational if and only if $K_S^2\ge 5$.
In particular, if $S/B$ is an arbitrary (not necessarily relatively
minimal) geometrically rational conic bundle or del Pezzo surface, and
$K_S^2\ge 5$, then $S$ is $\KK$-rational.
\end{theorem}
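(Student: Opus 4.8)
The plan is to prove the biconditional directly for Fano--Mori models and then to deduce the ``in particular'' clause from its forward implication. For the latter deduction, let $S/B$ be an arbitrary geometrically rational conic bundle or del Pezzo surface with $K_S^2\ge 5$, and run the two-dimensional minimal model program over $\KK$. Each elementary step contracts a Galois-orbit of pairwise disjoint $(-1)$-curves, so it strictly increases $K^2$ and, being a morphism defined over $\KK$, carries the smooth $\KK$-point of $S$ to a smooth $\KK$-point of its image. The program terminates at a Fano--Mori model $S'/B'$ with $S'(\KK)\neq\varnothing$ and $K_{S'}^2\ge K_S^2\ge 5$; since $\KK$-rationality is a birational invariant, the forward implication of the biconditional applied to $S'$ yields the ``in particular'' clause. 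It thus remains to prove the biconditional for Fano--Mori models, and I treat its two implications separately.

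For the implication $K_S^2\ge 5\Rightarrow S$ is $\KK$-rational I would run through the short list of Fano--Mori models of degree at least $5$. If $B$ is a point then $S$ is a minimal del Pezzo surface of degree $d=K_S^2\in\{5,6,8,9\}$; there is no minimal del Pezzo surface of degree $7$, since such a surface always carries a Galois-invariant $(-1)$-curve. In degree $9$ the surface $S$ is a Severi--Brauer surface, and a $\KK$-point trivializes it to $\PP^2_\KK$; in degree $8$ it is a smooth quadric surface, which becomes rational by projection from a smooth $\KK$-point; and in degrees $6$ and $5$ one writes down an explicit birational map to $\PP^2_\KK$ after moving the given point off the finitely many $(-1)$-curves, which is possible because $\KK$ is infinite. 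If $B$ is a curve then $\phi\colon S\to B$ is a relatively minimal conic bundle; the $\KK$-point forces $B\cong\PP^1$, and $K_S^2=8-k$, where $k$ is the number of degenerate geometric fibers, so $K_S^2\ge 5$ means $k\le 3$. In this range the classical analysis of conic bundles transforms $S$, via elementary links, into a del Pezzo surface of degree at least $5$ carrying a $\KK$-point, returning us to the previous case.

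For the reverse implication, that a $\KK$-rational Fano--Mori model must have $K_S^2\ge 5$, I would combine two tools. The first is Manin's birational invariant $H^1\big(\Gal(\bar\KK/\KK),\Pic(S_{\bar\KK})\big)$, which vanishes for $\PP^2_\KK$: its nonvanishing already excludes rationality for many minimal del Pezzo surfaces of degree at most $4$ and for conic bundles with many degenerate fibers. The second, needed in the remaining cases where this cohomology happens to vanish (such as certain minimal cubic surfaces), is the factorization of every birational map between Fano--Mori models into elementary links, i.e.\ the two-dimensional Sarkisov program in the form of Iskovskikh's factorization theorem. Since $S$ is $\KK$-rational precisely when it is birational to $\PP^2_\KK$, one tracks the effect of each admissible link on the discrete invariants $K^2$ and the number of degenerate fibers, and shows that no chain of links connects a minimal del Pezzo surface of degree at most $4$, or a relatively minimal conic bundle with at least $4$ degenerate fibers, to $\PP^2_\KK$.

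I expect this reverse implication to be the main obstacle. The implication $K^2\ge 5\Rightarrow\text{rational}$ is a finite bookkeeping of explicit rationality constructions, but excluding rationality in degree at most $4$ admits no elementary proof and genuinely requires the full factorization and rigidity machinery of the cited works of Manin and Iskovskikh; controlling every elementary link, and in particular ruling out any chain of links from a low-degree model to $\PP^2_\KK$, is the technical heart of the argument.
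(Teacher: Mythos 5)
The paper does not prove this statement at all: it is quoted as a known theorem with references to the series of papers of Manin and Iskovskikh, and to Iskovskikh's 1996 survey for the modern treatment via factorization into elementary links. So there is no internal proof to compare against; what can be said is that your outline is a faithful reconstruction of exactly the argument contained in those references. The reduction of the ``in particular'' clause to the relatively minimal case by running the MMP over $\KK$ (contracting Galois-orbits of disjoint $(-1)$-curves, which increases $K^2$ and preserves the $\KK$-point) is correct; the case analysis for $K_S^2\ge 5$ (Severi--Brauer surfaces, quadrics, degrees $5$ and $6$, non-minimality in degree $7$, and conic bundles with at most three degenerate geometric fibers) is the standard forward direction; and you correctly identify that the converse cannot be settled by Manin's invariant $H^1\bigl(\Gal(\bar\KK/\KK),\Pic(S_{\bar\KK})\bigr)$ alone (it can vanish, e.g.\ for some minimal cubic surfaces) and genuinely requires Iskovskikh's factorization theorem together with a link-by-link verification that no chain of elementary links joins a model with $K^2\le 4$ to $\PP^2_\KK$. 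That last verification is the technical core of the cited works and is only gestured at in your proposal, so it is an outline rather than a self-contained proof; but since the paper itself delegates the entire theorem to the same literature, your treatment is consistent with, and more informative than, what the paper provides.
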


\begin{theorem}[{\cite[Theorem~1.6(iii)]{Iskovskikh-Factorization-1996e}}]
\label{k-conic-bundles}
Let $S/B$ and $S'/B'$ be Fano--Mori models of a geometrically rational surface over a field $\KK$
and let $\chi\colon S\dashrightarrow S'$ be a $\KK$-birational map.
Assume that one has~\mbox{$K_S^2\le 0$},
both $B$ and $B'$ are one-dimensional.
Then~\mbox{$K_S^2=K_{S'}^2$}.
\end{theorem}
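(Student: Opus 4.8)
The plan is to track the behaviour of $K^2$ along the elementary steps of the two-dimensional Sarkisov factorization of $\chi$, and to use the hypothesis $K_S^2\le 0$ to rule out every step that could change it. First I would record that, since $S/B$ and $S'/B'$ are Fano--Mori models with one-dimensional base, they are relatively minimal (standard) conic bundles: the relative Picard rank is $1$, every fiber is a conic, and the geometric generic fiber is $\P^1$. As $S$ is geometrically rational, $B$ is a curve of genus $0$, and over $\bar\KK$ one has $K_S^2=8-k$, where $k$ is the number of degenerate geometric fibers. Thus the assertion $K_S^2=K_{S'}^2$ is equivalent to the invariance of the number of degenerate fibers in the range $K_S^2\le 0$.

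Next I would invoke Iskovskikh's factorization theorem (the two-dimensional Sarkisov program): the $\KK$-birational map $\chi$ decomposes into a finite chain of elementary links
\[
S=S_0\dashrightarrow S_1\dashrightarrow\cdots\dashrightarrow S_n=S',
\]
where each $S_i$ is a Mori fiber space over $\KK$ --- either a del Pezzo surface of Picard rank $1$ or a standard conic bundle --- and each arrow is one of the finitely many elementary link types. The decisive local computation is that an elementary transformation of a conic bundle blows up a closed point of degree $d$ lying on smooth fibers and contracts the strict transforms of those fibers: blowing up lowers $K^2$ by $d$ and the contraction raises it by $d$, so the link preserves both $K^2$ and the base curve.

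The main obstacle --- and the point at which the hypothesis $K_S^2\le 0$ enters --- is to show that no link in the chain can leave the conic-bundle category or otherwise alter the degree. Here I would use that every del Pezzo surface of Picard rank $1$ has $K^2\ge 1>0$, together with the Noether--Fano--Iskovskikh inequality, which forces any degree-raising link (in particular any link towards a del Pezzo model) to originate from a conic bundle that already has $K^2>0$. In other words, a standard conic bundle with $K^2\le 0$ is birationally rigid as a Mori fiber space: its conic-bundle structure is unique up to fiberwise elementary transformations, and it admits no $\KK$-birational map to a del Pezzo surface or to a conic bundle of different degree.

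Granting this rigidity, the conclusion telescopes: if $K_S^2\le 0$ then every intermediate $S_i$ is again a conic bundle with $K_{S_i}^2=K_S^2\le 0$ over a base of genus $0$, every link in the chain is a degree-preserving elementary transformation, and composing along the chain yields $K_S^2=K_{S'}^2$. I expect that the genuinely technical part is precisely the rigidity claim --- controlling the maximal centers of $\chi$ through the Noether--Fano inequality so that, once $K_S^2\le 0$, only fiberwise elementary transformations survive in the factorization; the bookkeeping for $K^2$ itself is the easy half.
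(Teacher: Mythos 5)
Your outline is essentially the argument behind the cited result; note, however, that the paper itself gives no proof of this statement at all --- it is quoted verbatim from Iskovskikh's factorization paper (Theorem~1.6(iii) of [Isk96]), so there is no internal proof to compare against. Your sketch (factor $\chi$ into elementary links, check that fiberwise elementary transformations preserve $K^2$, and use the classification of links together with the Noether--Fano inequality to show that for $K^2\le 0$ no link can pass through a del Pezzo model or change the fibration) is exactly how the result is established in that source; the one caveat is that the ``rigidity'' step you explicitly defer is not a routine lemma but the substantive content of Iskovskikh's classification of elementary links, so as written your text is a correct roadmap rather than a self-contained proof.
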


\begin{lemma}
\label{lemma-automorphisms-del-Pezzo-surfaces}
Let $S$ be a del Pezzo surface over an arbitrary field $\Bbbk$.
If $K_S^2\le 5$, then
$$
|\Aut (S)|\le 696\, 729\, 600.
$$
\end{lemma}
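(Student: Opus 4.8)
The plan is to bound $|\Aut(S)|$ for a del Pezzo surface $S$ with $K_S^2 \le 5$ by passing to the geometric (algebraically closed) setting and using the classification of automorphism groups of del Pezzo surfaces over $\bar{\Bbbk}$. First I would reduce to the case where $\Bbbk$ is algebraically closed: if $\bar S = S \times_\Bbbk \bar\Bbbk$ denotes the base change, then any $\Bbbk$-automorphism of $S$ induces a $\bar\Bbbk$-automorphism of $\bar S$, and this gives an injection $\Aut(S) \hookrightarrow \Aut(\bar S)$. Hence it suffices to bound $|\Aut(\bar S)|$, and I may assume from the start that the base field is algebraically closed.

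Over an algebraically closed field, a del Pezzo surface with $d = K_S^2 \le 5$ is either smooth or has only du Val singularities in the weak del Pezzo generalizations, but the cleanest route is to assume $S$ is smooth (or to invoke the Fano--Mori hypothesis that makes $S$ smooth) and use the fact that such a surface is a blow-up of $\PP^2$ at $9 - d \ge 4$ points in general position. The key step is then the well-known action of $\Aut(\bar S)$ on the Picard lattice $\Pic(\bar S)$: this action preserves the canonical class $K_S$ and the intersection form, so it factors through the Weyl group $W(R_d)$ of the root system $R_d$ associated to $K_S^\perp$ (of type $D_5, A_4, A_2\times A_1, A_1, \varnothing$ for $d = 1,\dots,5$, or the larger $E$-type systems for smaller $d$). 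The kernel of the map $\Aut(\bar S) \to W(R_d)$ consists of automorphisms acting trivially on $\Pic(\bar S)$, hence preserving each exceptional curve; such automorphisms form a linear algebraic group that is in fact trivial for $d \le 5$ because the configuration of $(-1)$-curves is rigid enough to pin down the map on $\PP^2$.

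Carrying this out, the bound $696\,729\,600$ is precisely the order of the Weyl group $W(E_8)$, which is $2^{14}\cdot 3^5 \cdot 5^2 \cdot 7 = 696\,729\,600$, and this is the largest Weyl group occurring (for $d = 1$). Since for every $d$ in the range $1 \le d \le 5$ the group $\Aut(\bar S)$ embeds into $W(R_d) \subseteq W(E_8)$, the uniform bound $|\Aut(\bar S)| \le |W(E_8)|$ follows. The main obstacle is establishing that the representation $\Aut(\bar S) \to W(R_d)$ is \emph{injective} for $d \le 5$: one must argue that no nontrivial automorphism acts trivially on the Picard group. This is where the hypothesis $K_S^2 \le 5$ is genuinely used, since for $d \ge 6$ the surface has positive-dimensional automorphism group (a torus factor), whereas for $d \le 5$ the four or more blown-up points are in sufficiently general position that their configuration, together with the induced lines, has no nontrivial projective symmetry fixing every $(-1)$-curve. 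I would handle this by the standard argument that an automorphism acting trivially on $\Pic(\bar S)$ descends to a projective automorphism of $\PP^2$ fixing at least four points in general position as well as all the lines through them, forcing it to be the identity.
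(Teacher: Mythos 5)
Your proof is correct and follows essentially the same route as the paper: reduce to an algebraically closed base field, observe that for $K_S^2\le 5$ the action of $\Aut(S)$ on $\Pic(S)$ is faithful, and bound the image by the order of the Weyl group $W(E_8)=696\,729\,600$ (the paper simply cites Dolgachev and Manin for the faithfulness that you prove by hand). One cosmetic slip: the root system types you list ($D_5$, $A_4$, $A_2\times A_1$, $A_1$, $\varnothing$) correspond to degrees $d=4,\dots,8$ rather than $d=1,\dots,5$ (which give $E_8,E_7,E_6,D_5,A_4$), but this does not affect the argument since all Weyl groups in question are contained in $W(E_8)$.
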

\begin{proof}
We may assume that $\Bbbk$ is algebraically closed.
Since $K_S^2\le 5$, the action of
$\Aut(S)$ on $\Pic(S)$ is faithful and so
the order of the group $\Aut(S)$ is bounded by the order of the Weyl
group $W(E_8)$, see \cite[Corollary 8.2.40]{Dolgachev-ClassicalAlgGeom}, \cite[Theorem 4.5]{Manin-1967}.
\end{proof}

Now we prove Theorem~\xref{theorem:rational-surface-vs-BFS}.

\begin{proof}[Proof of Theorem~\xref{theorem:rational-surface-vs-BFS}] 
Let $G\subset \Bir(S)$ be a finite group.
We may assume that $S$ is smooth. 
Replace $S$ with its $G$-minimal model \cite{Iskovskikh-1979s-e}.
Then $S$ is either a del Pezzo surface
(not necessarily minimal if we discard the action of $G$),
or a conic bundle (again
not necessarily relatively minimal if we discard the action of $G$). 
Since~$S$ is not $\KK$-rational and~\mbox{$S(\KK)\neq \varnothing$}, we have~\mbox{$K_{S}^2\le 4$}
by Theorem~\ref{rationality-criterion}.
If~$S$ is a del Pezzo surface, then by Lemma~\ref{lemma-automorphisms-del-Pezzo-surfaces}
the order of $G$ is bounded by an absolute constant.
From now on we assume that $S$ has a conic bundle
structure~\mbox{$f\colon S\to B$}.

Denote by $\bar \KK$ the algebraic closure of $\KK$, and 
for any object $\square$ defined over $\KK$ let
$$
\bar \square=\square\otimes \bar \KK
$$
be the corresponding extension of scalars.
Let $\Delta\subset B$ be the discriminant locus of the conic bundle
$f\colon S\to B$.
Let $\bar F_1,\dots,\bar F_n$ be the fibers over~$\bar{\Delta}$.
Every fiber $\bar F_i$ has the form~\mbox{$\bar F_i^{(1)}+\bar F_i^{(2)}$},
where
$\bar F_i^{(1)}$ and $\bar F_i^{(2)}$ are $(-1)$-curves meeting transversally at one point.
Up to permutation we may assume that $\bar F_1,\dots,\bar F_m$ are
fibers whose irreducible components are $\Gal(\bar \KK/\KK)$-conjugate, and
$\bar F_{m+1},\dots,\bar F_n$ are ones
whose irreducible components are not conjugate.
Thus we have a decomposition of $\Delta$ into a disjoint union of
two $G$-invariant subsets $\Delta'$ and $\Delta''$, where
$\Delta'= f(\bar F_1+\ldots+\bar F_m)$ and
$\Delta''= f(\bar F_{m+1}+\ldots+\bar F_n)$.
Now run the Minimal Model Program (without group action) on $S$ over $B$:
\[
\xymatrix@R=7pt{
S\ar[dr]_{f}\ar[rr] && S'\ar[dl]^{f'}
\\
&B&
}
\]
This means that we contract components of the fibers $\bar F_{m+1},\dots,\bar F_n$ (one in each fiber).
We end up with a conic bundle $f'\colon S'\to B$ whose discriminant locus coincides with $\Delta'$.
(Note that the action of $G$ on $S'$ is not regular any more!)
Since~$S'$ is not $\KK$-rational and~\mbox{$S'(\KK)\neq \varnothing$}, we have
$K_{S'}^2\le 4$ by Theorem~\ref{rationality-criterion}.
Hence, one has
\[
m=|\bar \Delta'|=8-K_{S'}^2\ge 4.
\]

The group $G$ fits into the following exact sequence
\begin{equation*}
1 \longrightarrow G_F \longrightarrow G \longrightarrow G_B \longrightarrow 1,
\end{equation*}
where $G_B$ acts faithfully on the base $B$,
and $G_F$ acts faithfully on the generic fiber of~$f$.
Since~$S$ is not $\KK$-rational,
the generic fiber~$S_\eta$ of~$f$ is a conic over the field $\KK(B)$
that has no $\KK(B)$-points. Hence by Theorem~\ref{theorem:Zarhin-conic} the order of
$G_F$ is at most $4$.
On the other hand, the group $G_B$ preserves the set $\Delta'\subset B$
that consists of $m\ge 4$ points. Therefore its order is bounded
by
$$
m != (8-K_{S'}^2)!
$$
Hence
\begin{equation}\label{eq:8-factorial}
|G|\le 4(8-K_{S'}^2)!
\end{equation}
If $K_{S'}^2\ge 1$, then~\eqref{eq:8-factorial} bounds the order of $G$
by a constant that does not depend on $S'$. On the other hand,
if $K_{S'}^2\le 0$, then by Theorem~\ref{k-conic-bundles} the number~$K_{S'}^2$, and thus also the number~$m$, is a birational invariant of~$S$, i.e. it does not depend on the
choice of the birational model~$S'$. Therefore, the order of~$G$ is again bounded
by~\eqref{eq:8-factorial}.
\end{proof}

\section{Non-rational surfaces}
\label{section:non-rational-surfaces}

In this section we collect some results about automorphism groups
of non uniruled surfaces. We believe that they are well known to experts,
but in some cases we failed to find appropriate references, and thus
included the proofs for the reader's convenience.

\begin{lemma}[{cf. \cite[Exercise~IX.7(1)]{Beauville-book}}]
\label{lemma:no-rational-curves}
Let $S$ be a smooth 
minimal surface, and~\mbox{$\phi\colon S\to \P^1$} be an elliptic fibration.
Suppose that there exists a
fiber $F$ of $\phi$ such that~$F_{\red}$ is not a smooth elliptic curve.
Then the irregularity of $S$ is zero.
\end{lemma}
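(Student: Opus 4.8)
The plan is to compute the irregularity $q(S)=h^1(S,\mathcal{O}_S)$ by means of the Leray spectral sequence for $\phi$ and to show that the one potentially surviving term vanishes precisely because of the singular fibre. Since $S$ is minimal it contains no $(-1)$-curves, so no fibre of $\phi$ contains a $(-1)$-curve, i.e.\ $\phi$ is a relatively minimal elliptic fibration; I use this throughout. The spectral sequence $E_2^{p,r}=H^p(\P^1,R^r\phi_*\mathcal{O}_S)\Rightarrow H^{p+r}(S,\mathcal{O}_S)$, together with $\phi_*\mathcal{O}_S=\mathcal{O}_{\P^1}$ (connected fibres), yields an exact sequence with outer terms $H^1(\P^1,\mathcal{O}_{\P^1})$ and $H^0(\P^1,R^1\phi_*\mathcal{O}_S)$. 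As $g(\P^1)=0$ the first term vanishes, so I would reduce the lemma to proving
\[
q(S)=h^0\bigl(\P^1,\,R^1\phi_*\mathcal{O}_S\bigr)=0.
\]

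Next I would identify $R^1\phi_*\mathcal{O}_S$ as an honest line bundle of negative degree. For the fibre class one has $F^2=0$, and adjunction applied to a general (smooth elliptic) fibre gives $F\cdot K_S=0$; being numerical, both hold for every scheme-theoretic fibre $S_b$, so $\chi(\mathcal{O}_{S_b})=1+\tfrac12(S_b^2+S_b\cdot K_S)-1=0$. I would then check $h^0(\mathcal{O}_{S_b})=1$ for all $b$: this is clear for reduced connected fibres, and for a multiple fibre $mE$ it follows from the fact that the normal bundle $N_{E/S}=\mathcal{O}_S(E)|_E$ is a point of order exactly $m$ in $\Pic^0(E)$, so that the successive quotients $N_{E/S}^{-j}$ ($1\le j\le m-1$) in the filtration of $\mathcal{O}_{mE}$ are nontrivial degree-zero line bundles with no sections, forcing $h^0(\mathcal{O}_{mE})=h^0(\mathcal{O}_E)=1$. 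Hence $h^1(\mathcal{O}_{S_b})=1$ is constant in $b$, and by Grauert's theorem $R^1\phi_*\mathcal{O}_S$ is locally free of rank $1$; in particular it is torsion-free. Comparing Euler characteristics through the spectral sequence gives $\chi(\mathcal{O}_S)=\chi(\mathcal{O}_{\P^1})-\chi(R^1\phi_*\mathcal{O}_S)$, and if $d=\deg R^1\phi_*\mathcal{O}_S$ this reads $\chi(\mathcal{O}_S)=1-(d+1)=-d$, so $\deg R^1\phi_*\mathcal{O}_S=-\chi(\mathcal{O}_S)$.

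It then remains to show $\chi(\mathcal{O}_S)>0$, since a line bundle of negative degree on $\P^1$ has no global sections, which by the previous step forces $q(S)=0$. For this I would invoke Noether's formula $\chi(\mathcal{O}_S)=\tfrac1{12}\bigl(K_S^2+e(S)\bigr)$. Relative minimality makes $K_S$ numerically a multiple of $F$, whence $K_S^2=0$ and $\chi(\mathcal{O}_S)=\tfrac1{12}e(S)$. The topological Euler number is $e(S)=\sum_b e\bigl((S_b)_{\red}\bigr)$, a smooth elliptic fibre contributing $0$. By hypothesis some fibre $F$ has $F_{\red}$ not a smooth elliptic curve; since a smooth reduced fibre is automatically elliptic (its reduced part is a smooth curve $E$ with $E^2=E\cdot K_S=0$, hence $p_a(E)=1$), the curve $F_{\red}$ is genuinely singular and contributes a strictly positive Euler number. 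Therefore $e(S)>0$, so $\chi(\mathcal{O}_S)>0$ and $\deg R^1\phi_*\mathcal{O}_S<0$, giving $q(S)=0$.

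The main obstacle is the verification that $R^1\phi_*\mathcal{O}_S$ carries no torsion, equivalently that $h^0(\mathcal{O}_{S_b})=1$ even at multiple fibres. Without it one obtains only $q(S)\le \operatorname{length}\bigl(\text{torsion of }R^1\phi_*\mathcal{O}_S\bigr)$, and a torsion summand could a priori contribute global sections and spoil the conclusion. The order-$m$ torsion property of the normal bundle of a multiple fibre is exactly what excludes this, and it is the single place where the relatively minimal elliptic structure enters in an essential, non-formal way.
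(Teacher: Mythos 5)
Your argument is correct, but it takes a genuinely different route from the paper's. The paper's proof is a two-line Albanese argument: by Kodaira's classification (or Zariski's lemma plus adjunction), every irreducible component of the fiber $F$ with $F_{\red}$ not smooth elliptic is a rational curve, so $F$ is contracted by the Albanese morphism $\alpha\colon S\to\Alb(S)$; since all fibers are algebraically equivalent, every fiber is then contracted, so $\alpha$ factors through $\phi$, and $\Alb(S)$ is dominated by $\Alb(\P^1)$, which is a point. You instead compute $q(S)=h^0\bigl(\P^1,R^1\phi_*\mathcal{O}_S\bigr)$ via Leray, identify $R^1\phi_*\mathcal{O}_S$ as a line bundle of degree $-\chi(\mathcal{O}_S)$ (the torsion-freeness at multiple fibers, via the order-$m$ normal bundle of the reduction, is indeed the delicate point and you handle it correctly), and then force $\chi(\mathcal{O}_S)>0$ by Noether's formula and the Euler-number contribution of the bad fiber. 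Your approach is heavier but yields the stronger quantitative conclusion $\chi(\mathcal{O}_S)>0$; the paper's is softer and shorter, and both ultimately exploit the rationality of the base ($\Alb(\P^1)=0$ versus $H^1(\P^1,\mathcal{O}_{\P^1})=0$ and degree control of sections). One small caveat in your write-up: the step ``$F_{\red}$ is genuinely singular and contributes a strictly positive Euler number'' is not automatic for an arbitrary singular connected curve (two components of positive genus meeting at a point give a negative Euler number, and even rational components could in principle meet too often); here it relies on the fact that all components of a non-smooth-elliptic fiber are rational and on the constraints of Zariski's lemma (equivalently, on Kodaira's table of fiber types, where every fiber other than $I_0$ and $_mI_0$ has strictly positive Euler number). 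This is standard and citable, e.g.\ from \cite[\S\,III.11]{BPVdV}, so it is a presentational gap rather than a mathematical one.
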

\begin{proof}
Every irreducible component of $F$ is a rational curve, see e.g.~\mbox{\cite[\S\,V.7]{BPVdV}}.
Hence $F$ is contracted by the Albanese morphism
$\alpha\colon S\to\Alb(S)$. Therefore, all other fibers of $\phi$
are contracted
by $\alpha$ as well, which means that $\alpha$ factors through $\phi$.
Thus the image $\alpha(S)$
is dominated by $\Alb(\P^1)$, which is a point.
\end{proof}

The following result is a version of Mordell--Weil theorem
over function fields, known as Lang--N\'eron theorem; see e.g.
\cite[Theorem~7.1]{Conrad}, and also~\cite[\S2]{Conrad}.

\begin{theorem}\label{theorem:MW}
Let $\phi\colon S\to B$ be an elliptic fibration over a curve
with a section,
and let~$\mathcal{E}$ be the fiber of $\phi$ over the general schematic
point of $B$.
Suppose that~$\phi$ is not locally trivial.
Then the group of $\Bbbk(B)$-points of~$\mathcal{E}$ is finitely
generated, and in particular the torsion subgroup of the group of points of~$\mathcal{E}$ is finite.
\end{theorem}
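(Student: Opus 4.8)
The plan is to reduce the statement to the Lang--N\'eron theorem and then to control the $K/\Bbbk$-trace. Put $K=\Bbbk(B)$. The section of $\phi$ furnishes a $K$-rational point of the generic fibre $\mathcal{E}$, so $\mathcal{E}$ is an elliptic curve over $K$. Let $(T,\tau)$ denote its $K/\Bbbk$-trace, that is, the terminal object among pairs consisting of an abelian variety $T$ over $\Bbbk$ together with a $K$-homomorphism $\tau\colon T_K\to\mathcal{E}$; recall that $\tau$ has finite kernel. The form of the theorem I would cite from \cite[Theorem~7.1]{Conrad} asserts that the quotient $\mathcal{E}(K)/\tau\big(T(\Bbbk)\big)$ is finitely generated.

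First I would observe that the whole statement follows once $T=0$. A finitely generated abelian group has finite torsion, so it suffices to prove that $\mathcal{E}(K)$ is finitely generated; if $T=0$ this is exactly the Lang--N\'eron conclusion. Conversely, if $T\neq 0$, then, $\Bbbk$ being algebraically closed, $T(\Bbbk)$ is an infinite divisible group, and so is its image $\tau\big(T(\Bbbk)\big)\subseteq\mathcal{E}(K)$; since a finitely generated abelian group contains no such subgroup, $\mathcal{E}(K)$ fails to be finitely generated. It therefore remains to show that the non-local-triviality of $\phi$ forces $T=0$.

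I would prove the contrapositive: if $T\neq 0$ then $\phi$ is locally trivial. As $\dim\mathcal{E}=1$ and $\tau$ has finite kernel, $T$ is an elliptic curve over $\Bbbk$ and $\tau$ is an isogeny, so $\mathcal{E}\cong T_K/C$ for a finite subgroup scheme $C\subset T_K$. In characteristic zero $C$ is \'etale and contained in $T_K[N]$ for some $N$; but the $N$-torsion of the constant curve $T_K$ is already defined over the algebraically closed field $\Bbbk\subset K$, so $C$ descends to a subgroup $C_0\subset T$ over $\Bbbk$ and $\mathcal{E}\cong (T/C_0)_K$ is a constant elliptic curve. A constant generic fibre carrying a section identifies $S$, after passing to its relatively minimal model over $B$, with the product $B\times(T/C_0)$, on which $\phi$ is a trivial, hence locally trivial, fibration; this contradicts the hypothesis.

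The Lang--N\'eron theorem is used here as a black box. The delicate point, and the one I expect to be the main obstacle, is the closing geometric identification, namely deducing local triviality of $\phi$ from constancy of $\mathcal{E}$. The algebraic half (isogenous to constant implies constant) is clean in characteristic zero, resting on the \'etaleness of $C$ and the constancy of the torsion of $T_K$. The geometric half requires the structure theory of elliptic surfaces to pass from the triviality of the constant fibration, a priori only birational over $B$, to honest local triviality of $\phi$, ruling out twists by means of the section and controlling degenerate fibres on the minimal model; this is precisely where the meaning of \emph{locally trivial in the Zariski topology} must be pinned down.
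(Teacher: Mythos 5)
Your proposal is correct in substance, but it does considerably more work than the paper, which offers no proof at all: Theorem~\ref{theorem:MW} is stated as a known form of the Lang--N\'eron theorem with a bare citation of \cite[Theorem~7.1]{Conrad}. What that citation leaves implicit is precisely the bridge you construct, namely that non-local-triviality of $\phi$ forces the $\Bbbk(B)/\Bbbk$-trace $(T,\tau)$ of $\mathcal{E}$ to vanish, so that finite generation of the Lang--N\'eron quotient $\mathcal{E}(\Bbbk(B))/\tau\bigl(T(\Bbbk)\bigr)$ actually yields finite generation of $\mathcal{E}(\Bbbk(B))$ itself. Your trace analysis is sound: in characteristic zero a nonzero trace makes $\tau$ an isogeny from a constant elliptic curve, its kernel is a constant finite subgroup because all torsion of $T_{\Bbbk(B)}$ is already defined over the algebraically closed field $\Bbbk$, hence $\mathcal{E}$ is itself constant; and the divisibility argument correctly shows that a nonzero trace is genuinely incompatible with finite generation, so the case division is exhaustive. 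The one point that needs care --- and you flag it yourself --- is the final geometric step: constancy of $\mathcal{E}$ gives local (indeed global) triviality of $\phi$ only after passing to the relatively minimal model over $B$, where uniqueness of the Kodaira--N\'eron model identifies $S$ with $B\times E_0$ over $B$; a non-relatively-minimal $S$ with constant generic fibre (e.g.\ a blow-up of a product) is not locally trivial and yet has non-finitely-generated Mordell--Weil group, so the theorem as literally stated requires relative minimality to be read into the phrase ``elliptic fibration.'' That hypothesis holds in the paper's only application, the Jacobian fibration of the pluricanonical fibration of a minimal surface in Lemma~\ref{lemma:Kodaira-dimension-1}, and with it made explicit your write-up is a complete proof.
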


We will say that a group \emph{has unbounded finite subgroups}
if it fails to have bounded finite subgroups.

\begin{lemma}\label{lemma:Kodaira-dimension-1}
Let $S$ be a smooth minimal surface of Kodaira dimension $1$,
and let~\mbox{$\phi\colon S\to B$} be its pluricanonical fibration.
Then the group $\Aut(S)$ has unbounded finite subgroups
if and only if
the Jacobian fibration $\phi_J$ of~$\phi$ is locally trivial.
\end{lemma}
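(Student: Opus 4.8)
The plan is to exploit the canonicity of the pluricanonical fibration and reduce the statement to the finiteness of the torsion in the Mordell--Weil group of $\phi_J$. Since $\phi$ is intrinsic to $S$, every automorphism preserves it, and I obtain an exact sequence
\[
1\longrightarrow \Aut_B(S)\longrightarrow \Aut(S)\longrightarrow H\longrightarrow 1,
\]
where $H\subseteq \Aut(B)$ and $\Aut_B(S)$ consists of the automorphisms acting trivially on $B$. The group $\Aut_B(S)$ is exactly $\Aut_{\Bbbk(B)}(S_\eta)$, where $S_\eta$ is the generic fiber, a torsor under the generic fiber $E_\eta$ of $\phi_J$. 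Using the torsor structure I would record the further exact sequence
\[
1\longrightarrow E_\eta(\Bbbk(B))\longrightarrow \Aut_{\Bbbk(B)}(S_\eta)\longrightarrow \Aut(E_\eta,O),
\]
whose translation subgroup is $E_\eta(\Bbbk(B))=\operatorname{MW}(J)$ and whose right-hand term is finite (of order dividing $24$). Thus finite subgroups of $\Aut_B(S)$ are bounded if and only if the torsion of $\operatorname{MW}(J)$ is finite, and the problem becomes one of comparing this torsion with local triviality of $\phi_J$, after checking that $H$ always has bounded finite subgroups.

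For the implication that local triviality forces unbounded finite subgroups, I would argue as follows. If $\phi_J$ is locally trivial, then all its fibers are isomorphic to a fixed elliptic curve $E_0$ and $\phi_J$ is a genuine $E_0$-bundle; trivializing it supplies the constant $n$-torsion sections, so $\operatorname{MW}(J)$ contains a subgroup isomorphic to $(\ZZ/n)^2$ for every $n$. Through the standard translation action of $\operatorname{MW}(J)$ on the relatively minimal elliptic surface $S$, each such section acts by a biregular automorphism, and a nonzero torsion section of order $n$ acts as an automorphism of order $n$. Hence $\Aut(S)$ contains $(\ZZ/n)^2$ for all $n$ and has unbounded finite subgroups.

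For the converse I would prove the contrapositive: if $\phi_J$ is not locally trivial, then $\Aut(S)$ has bounded finite subgroups. Then $\phi_J$ satisfies the hypothesis of Theorem~\ref{theorem:MW}, so $\operatorname{MW}(J)=E_\eta(\Bbbk(B))$ is finitely generated and its torsion is finite; by the first paragraph this bounds the finite subgroups of $\Aut_B(S)$. It remains to bound the finite subgroups of $H\subseteq\Aut(B)$. If $g(B)\ge 2$ this is automatic, as $\Aut(B)$ is finite. If $\phi$ is non-isotrivial, the $j$-invariant yields a non-constant $H$-invariant morphism $B\to\P^1$ whose fibers $H$ permutes, so $H$ is finite. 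In the remaining isotrivial case with $g(B)\le 1$ I would invoke the canonical bundle formula: Kodaira dimension $1$ forces the orbifold degree $2g(B)-2+\chi(\OOO_S)+\sum_i(1-1/m_i)$ to be positive, which guarantees that the finite set $T\subset B$ of points carrying singular or multiple fibers is nonempty when $g(B)=1$ and has at least three points when $g(B)=0$. Since $H$ preserves $T$, it has bounded finite subgroups in either case, and the extension above bounds the finite subgroups of $\Aut(S)$.

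The main obstacle is the precise interplay between Zariski-local triviality of $\phi_J$ and the size of the torsion of $\operatorname{MW}(J)$. In the easy direction I must ensure that local triviality genuinely provides the full $n$-torsion of the fiber as honest global torsion sections (that enough torsion survives the gluing) and that their translation action extends to biregular automorphisms across the multiple and singular fibers of $S$; both are standard for elliptic surfaces but deserve care. In the converse direction the delicate point is the isotrivial case, where Theorem~\ref{theorem:MW} still applies yet the geometry is closest to the locally trivial one, so I would lean on the canonical bundle formula to pin down the configuration of special fibers. I expect the bookkeeping in the isotrivial, genus-zero subcase---forcing at least three special points---to be the most technical step.
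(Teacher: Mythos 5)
Your proposal is correct in its overall skeleton and matches the paper's proof on the fiber side: both use the exact sequence $1\to\Aut(S)_\phi\to\Aut(S)\to H\to 1$, identify $\Aut(S)_\phi$ with $\Aut(S_\eta)$ via minimality, exhibit the Mordell--Weil group as a finite-index (or finite-cokernel) translation subgroup, get unboundedness from constant torsion sections in the locally trivial case, and apply Lang--N\'eron (Theorem~\ref{theorem:MW}) otherwise. Your treatment is in fact slightly more careful than the paper's on the ``easy'' direction, where the paper only remarks that $\Delta$ is infinite, while you correctly note that one needs the full $(\ZZ/n)^2$ of torsion sections.

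Where you genuinely diverge is in bounding the image $H\subseteq\Aut(B)$. The paper splits by $g(B)$ and, for $g(B)=0$, first disposes of the case of a fiber with non-smooth reduction via Lemma~\ref{lemma:no-rational-curves} and Theorem~\ref{theorem:Pr-Shr}(iii) (so that $\Gamma$ need not be bounded separately there), reducing to $\chi_{\mathrm{top}}(S)=0$, where the canonical bundle formula immediately forces at least three multiple fibers; for $g(B)=1$ it uses the affineness of $\mathcal{M}_1$ to derive a contradiction when there are no degenerate fibers. You instead split into non-isotrivial (handled by the $j$-map, which works with a little care about small-degree fibers) and isotrivial. Your approach is self-contained and avoids the appeal to Theorem~\ref{theorem:Pr-Shr}(iii), which is a genuine simplification of the logical dependencies. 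The one real caveat: in the isotrivial genus-zero case, your claim that the set $T$ of singular-or-multiple fibers has at least three points does \emph{not} follow from positivity of the orbifold degree alone, contrary to what your phrasing suggests. One must combine $\deg\bigl(K_B+L+\sum(1-1/m_i)P_i\bigr)>0$ with Noether's formula $\chi(\OOO_S)=\chi_{\mathrm{top}}(S)/12$ and the fact that every singular fiber of an isotrivial elliptic fibration has Euler number at most $10<12$ (types $I_0^*$, $II,\dots,II^*$ only), which rules out configurations such as a single $I_{36}$ fiber that would otherwise defeat the count. With that bookkeeping carried out --- and it does work out, giving $|T|\ge 3$ in every subcase --- your argument is complete; as written, that step is asserted rather than proved.
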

\begin{proof}
The morphism $\phi$ is $\Aut(S)$-equivariant.
Consider the exact sequence
$$
1\longrightarrow \Aut(S)_{\phi}\longrightarrow\Aut(S)\longrightarrow\Gamma\longrightarrow 1,
$$
where $\Gamma$ is a subgroup of $\Aut(B)$,
and the action of $\Aut(S)_{\phi}$ is fiberwise with respect
to~$\phi$.

The group $\Aut(S)_{\phi}$ is isomorphic to a subgroup of the group $\Aut(S_\eta)$,
where $S_\eta$ is the fiber of $\phi$ over the general schematic
point of $B$. Moreover, since the surface $S$ is minimal,
its birational automorphisms are biregular, and thus
$\Aut(S)_{\phi}$ is actually isomorphic to~\mbox{$\Aut(S_\eta)$}.
Since~$S_\eta$ is a smooth curve of genus $1$ over the field
$\Bbbk(B)$, we conclude that $\Aut(S_\eta)$ has a subgroup $\Delta$
of finite index isomorphic to the group of $\Bbbk(B)$-points
of the Jacobian of~$S_\eta$. If $\phi_J$ is locally trivial, then 
$\Delta$ is obviously an infinite group.
If $\phi_J$ is not locally trivial, then $\Delta$ has bounded finite subgroups
by Theorem~\ref{theorem:MW}.

Therefore, to prove the lemma it is enough to check that
$\Gamma$ always has bounded finite subgroups.
In particular, this holds if $g(B)\ge 2$,
since the group $\Aut(B)$ is finite in this case. Thus we will assume that~\mbox{$g(B)\le 1$}.

Suppose that $g(B)=1$. If $\phi$ has at least one degenerate fiber, then
the group~$\Gamma$ is finite. Thus we may assume that
$\phi$ has no degenerate fibers.
Then the Jacobian fibration $\phi_J$ of $\phi$ has no degenerate fibers as well.
Since $\phi_J$ has a section, it gives a well defined morphism $\nu$ from $B$ to the (coarse) 
moduli space $\mathcal{M}_1$ of elliptic curves. The image of $\nu$ is a single point, 
because~$\mathcal{M}_1$ is affine. Thus all fibers of $\phi_J$ are isomorphic.
This means that all fibers
of $\phi$ are isomorphic as well. Hence the surface $S$ is either
abelian, or bielliptic, see~\cite[\S\,V.5B]{BPVdV}. Both of the latter have Kodaira
dimension~$0$, which gives a contradiction.

Therefore, we see that $g(B)=0$.
Suppose that $\phi$ has a fiber $F$ such that $F_{\red}$ is not a smooth elliptic curve.
Then the irregularity of $S$ equals zero by Lemma~\ref{lemma:no-rational-curves}.
Since~$S$ is not uniruled, Theorem~\ref{theorem:Pr-Shr}(iii)
implies that the group $\Aut(S)$ has bounded finite subgroups.

Therefore, we may assume that all (set-theoretic) fibers of $\phi$ are smooth elliptic curves;
in particular, this applies to set-theoretic fibers $F_{\mathrm{red}}$, where $F$ is a multiple fiber.
We may assume that $\Bbbk=\CC$.
Then the topological Euler characteristic~\mbox{$\chi_{\mathrm{top}}(S)$} equals~$0$.
By the Noether formula one has
\[
\chi(\OOO_S)=\frac{1}{12}\left(K_S^2+\chi_{\mathrm{top}}(S)\right)=0.
\]
By the canonical bundle formula (see e.g. \cite[Theorem~V.12.1]{BPVdV})
we have
\[
K_S\sim\phi^*\left(K_B+L+ \sum (1-1/m_i) P_i\right),
\]
where $P_i$ are images of all multiple fibers of $\phi$, the fiber $\phi^{-1}(P_i)$ is a multiple fiber of multiplicity $m_i$, and $L$ is a divisor
of degree $\chi(\OOO_S)=0$. Since $S$ has Kodaira dimension~$1$,
we see that
\begin{equation*}
\deg \left(K_B+L+ \sum (1-1/m_i) P_i\right)> 0.
\end{equation*}
This implies that $\sum (1-1/m_i)\ge 2$.
Hence $\phi$ has at least three multiple fibers.
This means that the group $\Gamma$ is finite.
\end{proof}

\begin{remark}\label{remark:bielliptic}
Let $E$ be an elliptic curve, and $B$ be an arbitrary curve.
Let a finite group~$G$ act on $E$ by translations, and also act faithfully on $B$.
Consider a surface~\mbox{$S=(E\times B)/G$}. There is an elliptic fibration
$\phi\colon S\to B'$, where $B'=B/G$. Since every translation given
by a point of $E$ commutes with $G$, we see that the group of points of $E$
acts on $S$ so that the fibration $\phi$ is equivariant with respect to this action.
In particular, Theorem~\ref{theorem:MW} implies that the Jacobian fibration of $\phi$
is locally trivial. Another way to see this is to note that
the action of $G$ on the Jacobian fibration of the projection $E\times B\to B$
is via its action on~$B$.
\end{remark}

\begin{lemma}\label{lemma:non-uniruled-surface}
Let $S$ be a non uniruled surface.
Then the group $\Bir(S)$ has unbounded finite subgroups 
if and only if
$S$ is birational either to an abelian surface, or to a bielliptic surface,
or to a surface of Kodaira dimension $1$ such that the Jacobian fibration
of the pluricanonical fibration $\phi\colon S\to B$ is locally trivial.
\end{lemma}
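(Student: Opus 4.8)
The plan is to reduce the statement for $\Bir(S)$ to a statement about $\Aut$ of a well-chosen minimal model, using the Enriques--Kodaira classification to split into cases by Kodaira dimension. Since $S$ is non uniruled, its Kodaira dimension is $0$, $1$, or $2$, and bounded/unbounded finite subgroups is a birational invariant property, so I would first replace $S$ by its unique smooth minimal model $S_{\min}$; for non uniruled surfaces this minimal model is unique, so $\Bir(S)\cong\Bir(S_{\min})=\Aut(S_{\min})$, the last equality because birational selfmaps of a minimal surface of nonnegative Kodaira dimension are biregular. Thus it suffices to classify minimal non uniruled surfaces $S$ for which $\Aut(S)$ has unbounded finite subgroups.

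Next I would dispatch the cases of Kodaira dimension $2$ and most of dimension $0$. If $\kappa(S)=2$, then $\Aut(S)$ is finite, so it certainly has bounded finite subgroups and $S$ is excluded, consistently with the statement. For $\kappa(S)=0$, the minimal model is abelian, bielliptic, K3, or Enriques. For K3 and Enriques surfaces the irregularity is zero and $S$ is not uniruled, so Theorem~\ref{theorem:Pr-Shr}(iii) gives bounded finite subgroups, excluding them. For abelian surfaces the translation subgroup already furnishes arbitrarily large finite subgroups, and for bielliptic surfaces one produces unbounded finite subgroups via the translation action described in Remark~\ref{remark:bielliptic}; these are exactly the two $\kappa=0$ cases that appear in the conclusion.

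The remaining and central case is $\kappa(S)=1$. Here $S$ carries its canonical pluricanonical (elliptic) fibration $\phi\colon S\to B$, and this is the setting of Lemma~\ref{lemma:Kodaira-dimension-1}, which states precisely that $\Aut(S)$ has unbounded finite subgroups if and only if the Jacobian fibration $\phi_J$ of $\phi$ is locally trivial. Invoking that lemma directly identifies the surfaces of Kodaira dimension $1$ contributing to the unbounded case as exactly those with locally trivial Jacobian fibration, matching the third item of the conclusion.

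Assembling these pieces: for a non uniruled $S$, the group $\Bir(S)$ has unbounded finite subgroups if and only if $S_{\min}$ is abelian, bielliptic, or of Kodaira dimension $1$ with locally trivial Jacobian fibration of its pluricanonical fibration, and since all of these properties are preserved under birational equivalence of non uniruled surfaces, this is equivalent to $S$ being birational to one of the listed surfaces. The main obstacle is the Kodaira dimension $1$ case, but this has been isolated and handled by Lemma~\ref{lemma:Kodaira-dimension-1}, so the work here is primarily to verify that passing to the minimal model preserves the property and to organize the classification cleanly; the only subtlety to check is that the minimal model's properties (such as local triviality of the Jacobian fibration) are genuinely birational invariants, which follows because the pluricanonical fibration and hence its Jacobian are canonically attached to the birational class.
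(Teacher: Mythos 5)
Your proposal is correct and follows essentially the same route as the paper: pass to the (unique) minimal model so that $\Bir(S)=\Aut(S)$, dispose of the general type case by finiteness of $\Aut$, of the irregularity-zero cases by Theorem~\ref{theorem:Pr-Shr}(iii), handle abelian and bielliptic surfaces via translations and Remark~\ref{remark:bielliptic}, and reduce the Kodaira dimension $1$ case to Lemma~\ref{lemma:Kodaira-dimension-1}. The only cosmetic difference is that the paper first filters out all surfaces of irregularity zero before listing the possible minimal models, whereas you organize the cases by Kodaira dimension; the underlying ingredients are identical.
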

\begin{proof}
By Theorem \ref{theorem:Pr-Shr}(iii) we may assume
that the irregularity of $S$ is positive.
Replacing~$S$ by a minimal model (of its resolution of
singularities),
we may assume that $S$ is either
an abelian surface, or a bielliptic surface, or a surface of Kodaira dimension~$1$,
or a surface of general type; also, we have~\mbox{$\Bir(S)=\Aut(S)$}.
The automorphism group of an abelian surface 
obviously has unbounded finite subgroups.
The same holds for a bielliptic surface by Remark~\ref{remark:bielliptic}.
If $S$ has Kodaira dimension $1$, then the assertion follows from
Lemma~\ref{lemma:Kodaira-dimension-1}.
Finally, if $S$ is a surface of general type, then $\Aut(S)$ is finite.
\end{proof}

The only source of varieties with non Jordan birational automorphism
groups that we are aware of is the following construction of Yu.\,Zarhin.

\begin{theorem}[\cite{Zarhin10}]
\label{theorem:Zarhin-AxP1}
Let $A$ be a (positive dimensional) abelian variety
over an arbitrary field $\KK$ of characteristic zero,
and $A'$ be a torsor over $A$.
Put~\mbox{$X=A'\times\P^1$}.
Suppose that all torsion points of $A_{\bar{\KK}}$
are defined over~$\KK$ (in particular, this implies that $\KK$ contains all 
roots of~$1$). Then the group $\Bir(X)$ is not Jordan.
\end{theorem}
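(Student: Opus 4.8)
The plan is to disprove the Jordan property directly, by exhibiting inside $\Bir(X)$ finite Heisenberg groups of unbounded order all of whose abelian subgroups have large index. Recall that a finite Heisenberg group is a central extension
\[
1 \longrightarrow \mu_p \longrightarrow \mathcal{H}_p \longrightarrow V \longrightarrow 1,
\]
where $V=A[p]\cong(\ZZ/p)^{2g}$ with $g=\dim A\ge 1$, and the commutator pairing $V\times V\to\mu_p$ is a nondegenerate symplectic form (the Weil pairing). If $H\le\mathcal{H}_p$ is abelian, then its image in $V$ is isotropic, hence of order at most $p^{g}$, so $[\mathcal{H}_p:H]\ge p^{g}$. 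Since $g\ge 1$, for every $J$ one can choose a prime $p$ with $p^{g}>J$, and then $\mathcal{H}_p$ has no abelian subgroup, a fortiori no \emph{normal} abelian subgroup, of index at most $J$. Thus it suffices to embed $\mathcal{H}_p\hookrightarrow\Bir(X)$ for infinitely many primes $p$.

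To build such groups, first I would fix an ample line bundle $L$ on $A'$ defined over $\KK$ (which exists since $A'$ is projective), and for a prime $p$ not dividing the order of $K(L)=\{a : t_a^{*}L\cong L\}$ put $L_p=L^{\otimes p}$. The $p$-torsion $A[p]$ acts on $A'$ by translations $t_a$, and a short computation with the homomorphism $a\mapsto t_a^{*}L\otimes L^{-1}$ shows $A[p]\subseteq K(L_p)$. Hence one has the theta group
\[
\mathcal{G}(L_p)=\bigl\{(a,\phi)\ :\ a\in A[p],\ \phi\colon L_p\xrightarrow{\ \sim\ }t_a^{*}L_p\bigr\},
\]
a central extension of $A[p]$ by $\mathbb{G}_m$ whose commutator pairing on $A[p]$ is the (nondegenerate, for $p\nmid|K(L)|$) Weil pairing. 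This group acts on the $\P^1$-bundle $\PP(L_p\oplus\OOO_{A'})$ over $A'$: an element $(a,\phi)$ translates the base by $a$ and acts on the fibre $\PP\bigl((L_p)_x\oplus\KK\bigr)$ through $\phi_x\oplus\id$, while the central $\mu_p\subset\mathbb{G}_m$ scales the $L_p$-coordinate, i.e.\ acts on each fibre $\P^1$ by $z\mapsto\zeta z$. Since $\PP(L_p\oplus\OOO_{A'})$ has a section, its generic fibre is $\P^1$ over $\KK(A')$, so it is $\KK$-birational to $A'\times\P^1=X$. Taking the finite subgroup $\mathcal{H}_p$ generated by $\mu_p$ and a lift of $A[p]$ gives a finite Heisenberg group; it maps injectively to $\Bir(X)$ because $\mu_p$ acts faithfully on fibres while nonzero elements of $A[p]$ move the base.

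The hard part will be checking that this finite Heisenberg group is genuinely defined over $\KK$, and it is here that the hypotheses enter and that the torsor $A'$ (rather than $A$ itself) must be handled with care. The translations $t_a$, $a\in A[p]$, are defined over $\KK$ precisely because all torsion points of $A$ are $\KK$-rational; the center $\mu_p$ lies in $\KK^{*}$ because $\KK$ contains all roots of unity; and the Weil pairing, being Galois-equivariant and $\mu_p$-valued, automatically takes values in $\KK$. The one genuinely nontrivial step is that the intertwining isomorphisms $\phi\colon L_p\xrightarrow{\sim}t_a^{*}L_p$ can be chosen over $\KK$. For this I would invoke that, $A'$ being projective and geometrically integral, Hilbert's Theorem~90 yields an injection $\Pic(A')\hookrightarrow\Pic(A'\otimes\bar\KK)$; since $t_a^{*}L_p$ and $L_p$ are both defined over $\KK$ and become isomorphic over $\bar\KK$ (as $a\in A[p]\subseteq K(L_p)$), they are already isomorphic over $\KK$, and the isomorphism is unique up to $\KK^{*}$ because $A'$ is projective. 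This produces $\mathcal{G}(L_p)$, and hence $\mathcal{H}_p$, over $\KK$. The delicate content is thus purely arithmetic --- descending the theta data from $\bar\KK$ to $\KK$ --- and it is exactly the two standing hypotheses, together with Hilbert~90, that make the construction go through for the twisted form $A'$ as well as for $A$ itself, so that $\Bir(X)$ fails to be Jordan.
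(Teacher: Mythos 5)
Your construction is essentially the paper's own (which is itself only a sketch deferring to Zarhin): an ample line bundle on the torsor $A'$, its theta group over the $p$-torsion acting on a $\P^1$-fibration birational to $X$, and the nondegenerate commutator pairing forcing abelian subgroups to have index at least $p^{\dim A}$. Your write-up correctly fills in the two points the paper leaves implicit --- the isotropic-subgroup bound and the descent of the theta data to $\KK$ via the rationality of torsion points and Hilbert 90 --- so it is a valid, more detailed version of the same argument.
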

\begin{proof}[Sketch of the proof]
Let $L$ be an ample line bundle on $A'$, and
$Y$ be its total space. Then $Y$ is birational
to $X$. Choose a positive integer $n$, and consider
the group
$$
G_n\cong(\ZZ/n\ZZ)^{2\dim A}
$$
of $n$-torsion points of~$A$.
The group $G_n$ acts on $A'$ by translations, and also acts on~\mbox{$\Pic(A')$}.
Replacing $L$ by its power if necessary, we may assume that
$L$ is $G_n$-invariant.
The group~$G_n$ has an extension
$$
1\longrightarrow \ZZ/n\ZZ\longrightarrow \tilde{G}_n\longrightarrow G_n\longrightarrow 1
$$
acting on $Y$,
and thus acting by birational automorphisms of $X$.
Moreover, the group~$\tilde{G}_n$ does not contain abelian subgroups
of index less than~$n$, see~\cite[\S3]{Zarhin10}. Going through this construction for arbitrarily large~$n$,
one concludes that the group $\Bir(X)$ is not Jordan.
We refer the reader to \cite{Zarhin10} for details.
\end{proof}

Using Theorem~\ref{theorem:Zarhin-AxP1}, we obtain
the following result.

\begin{corollary}\label{corollary:not-Jordan}
Let $X$ be a variety birational to a product
$S\times \P^1$, where $S$ is either a bielliptic surface,
or a surface of Kodaira dimension $1$ such that the Jacobian fibration
of the pluricanonical fibration $\phi\colon S\to B$ is locally trivial.
Then the group $\Bir(X)$ is not Jordan.
\end{corollary}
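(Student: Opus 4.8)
The plan is to deduce the statement from Zarhin's Theorem~\ref{theorem:Zarhin-AxP1} by passing to the generic fibre of a suitable elliptic fibration on~$S$. In both cases I would first produce an elliptic fibration $\phi\colon S\to B$ whose Jacobian fibration is locally trivial. For a surface of Kodaira dimension~$1$ this is precisely the pluricanonical fibration, which is locally trivial by hypothesis. For a bielliptic surface such a fibration is furnished by Remark~\ref{remark:bielliptic}: writing $S=(E\times B_0)/G$ with $G$ acting on the elliptic curve~$E$ by translations, one takes $B=B_0/G$, and the remark guarantees that the Jacobian fibration of $\phi\colon S\to B$ is locally trivial.

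Next I set $\KK=\Bbbk(B)$. The generic fibre $S_\eta$ is a smooth curve of genus~$1$ over~$\KK$, hence a torsor over the elliptic curve $A=\operatorname{Jac}(S_\eta)$ over~$\KK$. The key point is that local triviality of the Jacobian fibration $\phi_J\colon J\to B$ forces $A$ to be constant: since $J$ is Zariski-locally isomorphic to a product $U\times E_0$ for a fixed elliptic curve~$E_0$ over~$\Bbbk$, restriction to the generic point of~$B$ yields $A\cong E_0\otimes_\Bbbk\KK$. Consequently every torsion point of $A_{\bar\KK}$ arises from a torsion point of~$E_0$, which is defined over $\Bbbk=\bar\Bbbk$ and therefore over~$\KK$. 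Thus the hypotheses of Theorem~\ref{theorem:Zarhin-AxP1} hold for the $A$-torsor $S_\eta$, and the group $\Bir_\KK(S_\eta\times_\KK\P^1)$ is not Jordan.

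Finally I would transport this back to~$X$. As $\Bir$ is a birational invariant, $\Bir(X)\cong\Bir(S\times\P^1)$. The projection $S\times\P^1\to B$ has generic fibre $S_\eta\times_\KK\P^1$, and the total space and this generic fibre share one and the same function field, $\Bbbk(S\times\P^1)=\KK(S_\eta\times_\KK\P^1)$. Any $\KK$-birational selfmap of $S_\eta\times\P^1$ is a $\KK$-automorphism of this field, a fortiori a $\Bbbk$-automorphism, so interpreting it as a $\Bbbk$-birational selfmap of $S\times\P^1$ gives an injection $\Bir_\KK(S_\eta\times_\KK\P^1)\hookrightarrow\Bir(X)$. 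Since the Jordan property passes to subgroups, and the source is not Jordan, neither is $\Bir(X)$.

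The main obstacle is the middle step: checking that the hypothesis of Zarhin's theorem—constancy of~$A$ together with $\KK$-rationality of all its torsion—genuinely follows from \emph{local} triviality of the Jacobian fibration rather than from mere isotriviality, and, in the bielliptic case, first singling out (via Remark~\ref{remark:bielliptic}) the fibration for which local triviality is available. Once $S_\eta$ is recognised as a torsor over a constant elliptic curve with $\KK$-rational torsion, the passage through Zarhin's theorem and the restriction-to-generic-fibre injection are formal.
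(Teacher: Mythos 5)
Your proposal is correct and follows essentially the same route as the paper: pass to the generic fibre of the elliptic fibration over $B$, observe that local triviality of the Jacobian fibration makes the Jacobian of $S_\eta$ a constant elliptic curve so that all its torsion is $\Bbbk(B)$-rational, apply Theorem~\ref{theorem:Zarhin-AxP1} to the torsor $S_\eta$, and embed $\Bir(S_\eta\times\P^1)$ into $\Bir(X)$. Your write-up merely makes explicit two points the paper leaves implicit (why constancy of the Jacobian follows from Zariski-local triviality, and why restriction to the generic fibre injects birational automorphism groups), both of which you handle correctly.
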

\begin{proof}
In both cases
there is an elliptic fibration $\phi\colon S\to B$
for some curve $B$; moreover, the corresponding Jacobian fibration $\phi_J$ is
locally trivial, cf. Remark~\ref{remark:bielliptic}.
Let $\mathcal{E}$ be the fiber of $\phi_J$ over the general schematic point of $B$.
Then $\mathcal{E}$ is an elliptic curve over the field $\KK=\Bbbk(B)$,
and all torsion points of $\mathcal{E}_{\bar{\KK}}$
are defined over~$\KK$.  
Furthermore, there is a fibration $\hat{\phi}\colon X\to B$
whose fiber $X_\eta$ over the general schematic point
of $B$ is isomorphic to a product $\mathcal{E}'\times\P^1$, where $\mathcal{E}'$
is a curve of genus~$1$ that is a torsor over $\mathcal{E}$;
in other words,~$\mathcal{E}$ is the Jacobian of $\mathcal{E}'$.
By Theorem~\ref{theorem:Zarhin-AxP1} the group $\Bir(X_\eta)$
is not Jordan. Since $\Bir(X_\eta)$ is a subgroup
of $\Bir(X)$, we conclude that the group $\Bir(X)$ is also not Jordan.
\end{proof}

An example of a surface of Kodaira dimension $1$ with the properties
required in Corollary~\ref{corollary:not-Jordan} is a product
$B\times E$, where $B$ is a curve of genus at least $2$ and $E$
is an elliptic curve. The following example shows that there are
much more surfaces of this kind.

\begin{example}\label{example:arbitrary-base}
Let $B'$ be a smooth curve of genus $g(B')\ge 2$ with an automorphism $\theta$ of order
$n$. Denote by $\bar{G}$ the subgroup in $\Aut(B)$ generated by $\theta$, and put
$B=B'/\bar{G}$.
Let $E$ be an elliptic curve, and $e$ be its point of order $n$.
Let the generator of a cyclic group $G\cong\ZZ/n\ZZ$ act on
$B'\times E$ as
$$
(x,y)\longmapsto (\theta(x),y+e).
$$
Put $S=(B'\times E)/G$. Then there is an elliptic fibration
$\phi\colon S\to B$ whose general fiber is isomorphic to $E$.
Comparing the canonical bundle formula
for $\phi$ with the Hurwitz formula for the finite cover $B'\to B$,
we see that the canonical class $K_S$ is a pull-back of some
$\QQ$-divisor of positive degree on $B$. In particular, $K_S$ is nef,
$S$ is a minimal surface of Kodaira dimension $1$, and $\phi$ is
its pluricanonical fibration. If the action of the group~$\bar{G}$
on $B'$ is not free, then $\phi$ has multiple fibers, and in particular
$\phi$ is not locally trivial. On the other hand, the Jacobian
fibration of $\phi$ is locally trivial by Remark~\ref{remark:bielliptic}.
Note that one can
arrange such situation for an arbitrary curve $B$, including rational
and elliptic curves. For instance, one can produce $B'$ as a double cover
of $B$ with sufficiently many branch points, and choose $\theta$ to be the Galois
involution of this double cover.
\end{example}

\section{Threefolds}
\label{section:threefolds}

In this section we prove Theorem~\ref{theorem:dim-3}.

\begin{definition}[{\cite[Definition~2.5]{Prokhorov-Shramov-2013},
\cite[Definition~1.1]{BandmanZarhin2015a}}]
\label{definition:bounded-rank}
We say that a group~$\Gamma$
\emph{has finite subgroups of bounded rank}
if there exists a constant $R=R(\Gamma)$ such that
each finite abelian subgroup
$A\subset\Gamma$ is generated by at most $R$ elements.
\end{definition}

\begin{lemma}
\label{lemma:group-theory}
Let
$$
1\longrightarrow\Gamma'\longrightarrow\Gamma\longrightarrow\Gamma''
$$
be an exact sequence of groups. Then the following assertions
hold.
\begin{itemize}
\item[(i)] If $\Gamma'$ is Jordan, and $\Gamma''$ has bounded finite
subgroups, then $\Gamma$ is Jordan.

\item[(ii)] If $\Gamma'$ has bounded finite subgroups,
and $\Gamma''$ is Jordan and has
finite subgroups of bounded
rank, then $\Gamma$ is Jordan.
\end{itemize}
\end{lemma}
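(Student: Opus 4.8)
The plan is to restrict the given sequence to a finite subgroup and analyze the resulting extension. Let $G\subset\Gamma$ be a finite subgroup, set $G'=G\cap\Gamma'$ (which is normal in $G$, since $\Gamma'$ is the kernel of $\Gamma\to\Gamma''$), and let $G''$ be the image of $G$ in $\Gamma''$, so that
\[
1\longrightarrow G'\longrightarrow G\longrightarrow G''\longrightarrow 1
\]
is exact with $G'$ a finite subgroup of $\Gamma'$ and $G''$ a finite subgroup of $\Gamma''$. In both parts the goal is to produce a normal abelian subgroup of $G$ whose index is bounded by a constant independent of $G$.

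For (i) I would argue as follows. Since $\Gamma''$ has bounded finite subgroups, $[G:G']=|G''|\le B''$. Since $\Gamma'$ is Jordan, $G'$ contains a normal abelian subgroup $A'$ with $[G':A']\le J'$, hence $[G:A']\le B''J'$. The subgroup $A'$ need not be normal in $G$, so I would pass to its \emph{core} $A=\bigcap_{g\in G}gA'g^{-1}$, the kernel of the permutation action of $G$ on the coset space $G/A'$. Then $A$ is normal in $G$, abelian as a subgroup of $A'$, and $[G:A]$ divides $[G:A']!\le (B''J')!$. This gives the Jordan constant for $\Gamma$.

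For (ii) the extension is handled from the other side. Now $|G'|\le B'$, and since $\Gamma''$ is Jordan the group $G''$ has a normal abelian subgroup $A''$ with $[G'':A'']\le J''$; moreover $A''$ is generated by at most $R$ elements because $\Gamma''$ has finite subgroups of bounded rank. Let $H\subset G$ be the preimage of $A''$, so that $H$ is normal of index $\le J''$ in $G$ and sits in an extension $1\to G'\to H\to A''\to 1$. I would then consider the conjugation action of $G$ on the normal subgroup $G'$, giving a homomorphism $G\to\Aut(G')$ whose image has order at most $|\Aut(G')|$, a quantity bounded in terms of $B'$, and whose kernel is $C_G(G')$. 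Because both $G'$ and $H$ are normal in $G$, the subgroup $K=C_H(G')$ is normal in $G$ and has index bounded in terms of $J''$ and $B'$. By construction $K/(K\cap G')$ embeds into $A''$, so it is abelian and generated by at most $R$ elements, while $K\cap G'=Z(G')$ is central in $K$ of order $\le B'$. Thus $K$ is nilpotent of class at most $2$, with bounded-rank abelianization and bounded center.

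The decisive step is to extract a large abelian subgroup of such a $K$. Since $[K,K]\subseteq K\cap G'=:Z$ is central, the commutator induces a well-defined bi-multiplicative alternating pairing $K/Z\times K/Z\to Z$, and the associated map $K/Z\to\operatorname{Hom}(K/Z,Z)$ has kernel $Z(K)/Z$. As $K/Z$ needs at most $R$ generators and $|Z|\le B'$, the target has order at most $(B')^R$, whence $[K:Z(K)]\le (B')^R$. Therefore $Z(K)$ is an abelian subgroup of $K$ of bounded index; being characteristic in the normal subgroup $K$, it is normal in $G$, with $[G:Z(K)]\le J''\,|\Aut(G')|\,(B')^R$. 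This yields the Jordan constant for $\Gamma$. I expect this last nilpotent-group analysis — bounding $[K:Z(K)]$ via the commutator pairing — to be the main technical point; the remainder is bookkeeping with indices and the standard core construction, with care taken throughout to keep the chosen abelian subgroup normal, or characteristic, in $G$.
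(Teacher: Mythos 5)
Your proof is correct. The paper itself gives no argument for this lemma but simply cites \cite[Lemmas~2.3 and~2.8]{Prokhorov-Shramov-2013}, and your write-up is essentially a reconstruction of those proofs: for (i), intersecting with $\Gamma'$, applying the Jordan property there, and replacing the resulting abelian subgroup by its core to restore normality; for (ii), passing to the preimage $H$ of $A''$, cutting down to the normal subgroup $K=C_H(G')$ of bounded index, and then bounding $[K:Z(K)]$ by $|Z|^{R}$ via the commutator pairing on the central extension $1\to Z\to K\to K/Z\to 1$ --- all index estimates and the normality (indeed characteristicity) of $Z(K)$ in $K\trianglelefteq G$ check out.
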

\begin{proof}
For assertion~(i) see~\cite[Lemma~2.3]{Prokhorov-Shramov-2013}.
For assertion~(ii) see~\cite[Lemma~2.8]{Prokhorov-Shramov-2013}.
\end{proof}

\begin{proposition}
\label{proposition:non-uniruled}
Let $X$ be a non uniruled variety. Then $\Bir(X)$ has
finite subgroups of bounded rank.
\end{proposition}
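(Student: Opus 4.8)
The plan is to induct on $\dim X$, reducing the general case to lower-dimensional non uniruled fibers via the Albanese map. First I would replace $X$ by a smooth projective birational model, which does not change $\Bir(X)$. If the irregularity $q(X)$ vanishes, then Theorem~\ref{theorem:Pr-Shr}(iii) already tells us that $\Bir(X)$ has bounded finite subgroups; since a finite group of order at most $B$ is generated by at most $\log_2 B$ elements, this a fortiori gives finite subgroups of bounded rank, settling both the case $\dim X\le 1$ and the base of the induction. So from now on I may assume $q(X)>0$.

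Next I would exploit that the Albanese map $\alpha\colon X\dashrightarrow A=\Alb(X)$ is a birational invariant of smooth projective varieties: resolving indeterminacy, every $g\in\Bir(X)$ induces an automorphism $g_*$ of $A$ with $\alpha\comp g=g_*\comp\alpha$, giving a homomorphism $\Bir(X)\to\Aut(A)$. I would control its image as follows. Writing $\Aut(A)=A(\Bbbk)\rtimes\Aut_{\mathrm{gp}}(A)$, the linear part embeds into $\GL(H^1(A,\ZZ))\cong\GL_{2q}(\ZZ)$, so its finite subgroups have bounded order by Theorem~\ref{theorem:Jordan}, while the translation part $A(\Bbbk)$ has torsion $(\QQ/\ZZ)^{2q}$ and hence finite abelian subgroups of rank at most $2q$. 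Thus finite abelian subgroups of the image of $\Bir(X)$ in $\Aut(A)$ have rank bounded in terms of $q=\dim A$.

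For the kernel $\Bir(X)_\alpha$ I would pass to the Albanese fibration. After Stein factorization $X\xrightarrow{f}Y\to A$, we have $\dim Y=\dim\alpha(X)\ge 1$, the inequality holding because $q(X)>0$. The kernel maps to the finite group $\Aut(Y/A)$ with a subgroup $N$ acting trivially on $Y$, i.e. preserving every fiber of $f$. Given a finite abelian $A_0\subset N$ and a general point $y\in Y$, restriction to $F=f^{-1}(y)$ defines a homomorphism $A_0\to\Bir(F)$; since any nonidentity element of $A_0$ is nontrivial on a dense open subset of $X$ meeting a general fiber, this restriction is injective for general $y$. The fiber $F$ is smooth projective, of dimension $\dim X-\dim Y<\dim X$, and non uniruled, as otherwise a general fiber of $f$, and hence $X$ itself, would be uniruled. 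By the induction hypothesis $\Bir(F)$ has finite subgroups of bounded rank, so $\rk A_0$ is bounded.

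Finally, for an arbitrary finite abelian $G\subset\Bir(X)$ I would combine the two bounds: since $\rk G\le\rk\bigl(G\cap\Bir(X)_\alpha\bigr)+\rk\bigl(\text{image of }G\text{ in }\Aut(A)\bigr)$, both summands are bounded independently of $G$, which closes the induction. I expect the \emph{main obstacle} to be the fiberwise step: checking that the $\Bir(X)$-action on the Albanese variety is genuinely well defined, that the kernel restricts faithfully to a general fiber, and that this general fiber really is a smaller-dimensional non uniruled variety over $\Bbbk$, so that the induction never has to leave the class of varieties over an algebraically closed field.
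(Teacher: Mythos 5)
The paper gives no argument of its own here---the ``proof'' is a pointer to \cite[Corollary~3.8]{BandmanZarhin2015a} and \cite[Remark~6.9]{Prokhorov-Shramov-2013}, which extract the bound from the structure of the automorphism group of a suitable model of a non uniruled variety (connected component an abelian variety of dimension at most $q(X)$, component group acting on a lattice of bounded rank) rather than by induction along a fibration. Within your scheme, the treatment of the Albanese \emph{image} is fine: the action of $\Bir(X)$ on $\Alb(X)$ is well defined for a smooth projective model, and a finite abelian subgroup of $\Aut(\Alb(X))$ has translation part of rank at most $2q$ and group-theoretic part of bounded order.

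The genuine gap is in the fiberwise step. For a finite abelian $A_0\subset N$ you choose a general point $y=y(A_0)$ so that $A_0$ injects into $\Bir(F_y)$, and then invoke the induction hypothesis for $F_y$. But the induction hypothesis only supplies a constant $R(F_y)$ attached to the individual variety $F_y$, and $y$ varies with $A_0$: nothing in the argument shows that $R(F_y)$ is bounded uniformly over general $y$, and you cannot simply fix one $y_0$ once and for all, because a nontrivial fiberwise birational map may act as the identity on any prescribed fiber. So your final inequality bounds $\rk A_0$ by a quantity that still depends on $A_0$. Closing this requires a real additional input: either strengthen the inductive statement to a bound that is uniform over the general fibers of a family (i.e.\ redo the induction in families), or replace the closed fiber by the generic fiber $X_\eta$ over $\Bbbk(Y)$ and run the induction over arbitrary fields of characteristic zero---Theorem~\ref{theorem:Pr-Shr}(iii) is stated in that generality, but then the Albanese and torsion bookkeeping must be rechecked over non-closed fields, which is exactly the issue you tried to sidestep by using closed fibers. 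A minor further slip: the case $\dim X\le 1$ is not settled by the irregularity-zero case (an elliptic curve has $q=1$), though your induction does bottom out correctly in dimension~$0$.
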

\begin{proof}
See the proof of~\cite[Corollary~3.8]{BandmanZarhin2015a},
or~\cite[Remark~6.9]{Prokhorov-Shramov-2013}.
\end{proof}

Recall that to any variety $X$
one can associate the \emph{maximal rationally connected fibration}
\[
\phi_{\mathrm{RC}}\colon X\dasharrow X_{\mathrm{nu}},
\]
which is a canonically defined rational map with
rationally connected fibers and non-uniruled base $X_{\mathrm{nu}}$
(see~\cite[\S\,IV.5]{Kollar-1996-RC},
\cite[Corollary~1.4]{Graber-Harris-Starr-2003}).
The maximal rationally connected fibration is equivariant
with respect to the group~$\Bir(X)$.

\begin{theorem}[{\cite[Theorem~1.5]{BandmanZarhin2015a}}]
\label{theorem:Zarhin-rel-dim-1}
Let $X$ be a variety, and $\phi\colon X\dasharrow Y$
be the maximal rationally connected fibration.
Suppose that $\dim Y=\dim X-1$. Then $\Bir(X)$
is Jordan unless $X$ is birational to $Y\times\P^1$.
\end{theorem}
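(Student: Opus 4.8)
The plan is to exploit the $\Bir(X)$-equivariance of the maximal rationally connected fibration $\phi$ to break the Jordan question for $\Bir(X)$ into questions about a fiberwise group and a base group. Since $\phi$ is equivariant, there is an exact sequence
\[
1 \longrightarrow \Bir(X)_\phi \longrightarrow \Bir(X) \longrightarrow \Gamma \longrightarrow 1,
\]
where $\Gamma$ is the image of $\Bir(X)$ in the group of birational automorphisms of the base $Y$, and $\Bir(X)_\phi$ is the subgroup acting fiberwise with respect to $\phi$. The strategy is to show that $\Bir(X)_\phi$ has bounded finite subgroups while $\Gamma$ is Jordan and has finite subgroups of bounded rank, and then to invoke Lemma~\ref{lemma:group-theory}(ii).

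First I would treat the base group $\Gamma$. By construction the base $Y$ of the maximal rationally connected fibration is non uniruled, so Theorem~\ref{theorem:Pr-Shr}(ii) shows that $\Bir(Y)$ is Jordan, and Proposition~\ref{proposition:non-uniruled} shows that it has finite subgroups of bounded rank. Both of these properties are inherited by the subgroup $\Gamma \subseteq \Bir(Y)$, so the hypotheses on the quotient term in Lemma~\ref{lemma:group-theory}(ii) are satisfied unconditionally, independent of the fiber structure.

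Next I would analyze the fiberwise group $\Bir(X)_\phi$. Since $\dim Y = \dim X - 1$, the generic fiber $X_\eta$ of $\phi$ is a one-dimensional rationally connected variety over the function field $K = \Bbbk(Y)$, that is, a smooth conic. The elements of $\Bir(X)_\phi$ are exactly the $K$-birational self-maps of $X_\eta$, which, $X_\eta$ being a smooth curve of genus zero, agree with $\Aut(X_\eta)$. Here the dichotomy of the theorem appears: either $X_\eta$ has a $K$-point, in which case $X_\eta \cong \P^1_K$ and $X$ is birational to $Y \times \P^1$ --- the exceptional case --- or $X_\eta$ has no $K$-point. In the latter case, since $\Bbbk$ is algebraically closed of characteristic zero the field $K \supseteq \Bbbk$ contains all roots of unity, and Theorem~\ref{theorem:Zarhin-conic} bounds the order of every finite subgroup of $\Aut(X_\eta)$ by $4$. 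Thus outside the exceptional case $\Bir(X)_\phi$ has bounded finite subgroups.

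Putting the two analyses together, when $X$ is not birational to $Y \times \P^1$ the exact sequence has $\Bir(X)_\phi$ with bounded finite subgroups and $\Gamma$ Jordan with finite subgroups of bounded rank, so Lemma~\ref{lemma:group-theory}(ii) gives that $\Bir(X)$ is Jordan. I expect the main obstacle to be the precise identification of $\Bir(X)_\phi$ with $\Aut(X_\eta)$: one must check that fiberwise birational self-maps of $X$ correspond exactly to $K$-automorphisms of the generic conic, and that a smooth genus-zero curve over $K$ is $K$-isomorphic to $\P^1_K$ precisely when it carries a $K$-point, so that the case $X_\eta \cong \P^1_K$ is genuinely the same as $X$ being birational to $Y \times \P^1$. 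These facts are standard, but they require care in passing between the generic fiber and a global birational model of $X$ on which $\phi$ is a morphism with smooth generic fiber.
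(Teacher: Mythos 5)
Your argument is correct. The paper itself quotes this theorem from Bandman--Zarhin without proof, but your proof is exactly the relative-dimension-one analogue of the paper's own proof of Lemma~\ref{lemma:rel-dim-2}: the same equivariant exact sequence for $\phi$, the same use of Theorem~\ref{theorem:Pr-Shr}(ii) and Proposition~\ref{proposition:non-uniruled} for the non-uniruled base, Theorem~\ref{theorem:Zarhin-conic} in place of Theorem~\ref{theorem:rational-surface-vs-BFS} for the pointless generic fiber, and Lemma~\ref{lemma:group-theory}(ii) to conclude.
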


\begin{corollary}\label{corollary:dim-3-rel-dim-1}
Let $X$ be a threefold, and $\phi\colon X\dasharrow Y$
be the maximal rationally connected fibration.
Suppose that $\dim Y=2$. Then $\Bir(X)$
is not Jordan if and only if $X$ is birational to $Y'\times\P^1$,
where $Y'$ is either an abelian surface, or
a bielliptic surface, or
a surface of Kodaira dimension $1$ such that the Jacobian fibration
of the pluricanonical fibration $\phi\colon S\to B$ is locally trivial.
\end{corollary}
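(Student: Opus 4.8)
The plan is to deduce Corollary~\ref{corollary:dim-3-rel-dim-1} by combining Theorem~\ref{theorem:Zarhin-rel-dim-1} with the two-dimensional classification obtained in Lemma~\ref{lemma:non-uniruled-surface} and Corollary~\ref{corollary:not-Jordan}. Since $\dim Y = 2 = \dim X - 1$, Theorem~\ref{theorem:Zarhin-rel-dim-1} applies directly: it tells us that $\Bir(X)$ is Jordan \emph{unless} $X$ is birational to $Y\times\P^1$. Thus the non-Jordan case forces $X$ to be birational to $Y\times\P^1$, and it remains to determine precisely when this product has non-Jordan birational automorphism group. Note that $Y$ is the base of the maximal rationally connected fibration, hence non uniruled, and I may replace $Y$ by any birational model; in particular I am free to take $Y'$ to be a smooth minimal model of $Y$.

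The core of the argument is to establish the equivalence: for $Y'$ a non uniruled surface, $\Bir(Y'\times\P^1)$ is not Jordan if and only if $Y'$ is one of the three listed surface types. First I would handle the ``if'' direction. If $Y'$ is a bielliptic surface or a surface of Kodaira dimension $1$ with locally trivial Jacobian fibration, then Corollary~\ref{corollary:not-Jordan} immediately gives that $\Bir(Y'\times\P^1)$ is not Jordan. If $Y'$ is an abelian surface, then $Y'\times\P^1$ falls under the Zarhin construction with $A = A' = Y'$ a two-dimensional abelian variety all of whose torsion is rational over $\Bbbk$ (the ground field being algebraically closed), so Theorem~\ref{theorem:Zarhin-AxP1} applies and $\Bir(Y'\times\P^1)$ is not Jordan.

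For the ``only if'' direction I would argue contrapositively. Suppose $Y'$ is \emph{not} of any of the three listed types. Then by Lemma~\ref{lemma:non-uniruled-surface} the group $\Bir(Y')$ has \emph{bounded} finite subgroups (this is exactly the negation of the classification in that lemma). Now $\Bir(Y'\times\P^1)$ fits into an exact sequence
\begin{equation*}
1\longrightarrow \Bir\bigl((Y'\times\P^1)_\eta\bigr)\longrightarrow \Bir(Y'\times\P^1)\longrightarrow \Bir(Y'),
\end{equation*}
where the generic fiber $(Y'\times\P^1)_\eta \cong \P^1_{\Bbbk(Y')}$ is a rational curve over the function field $\Bbbk(Y')$; thus the kernel $\Bir(\P^1_{\Bbbk(Y')}) \cong \PGL_2(\Bbbk(Y'))$ is Jordan by Theorem~\ref{theorem:Serre}. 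Since $\Bir(Y')$ has bounded finite subgroups, Lemma~\ref{lemma:group-theory}(i) yields that $\Bir(Y'\times\P^1)$ is Jordan. This gives the required contrapositive and completes the equivalence.

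The main obstacle I anticipate is bookkeeping the birational invariance and the interaction of the two fibration structures rather than any deep new input: one must check that the maximal rationally connected fibration really lets us pass freely to a minimal model $Y'$ of the base, and that the surface-type trichotomy of Lemma~\ref{lemma:non-uniruled-surface} is preserved under birational equivalence of $Y$ (which it is, since all the listed classes are birational invariants). A secondary subtlety is confirming that the generic fiber of the projection $Y'\times\P^1\to Y'$ is genuinely $\P^1$ over $\Bbbk(Y')$ so that Serre's theorem supplies a Jordan kernel; this is immediate but worth stating explicitly to invoke Lemma~\ref{lemma:group-theory}(i) cleanly.
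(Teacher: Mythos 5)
Your proposal is correct and follows essentially the same route as the paper: reduce to the case $X$ birational to $Y\times\P^1$ via Theorem~\ref{theorem:Zarhin-rel-dim-1}, use the exact sequence induced by the $\Bir(X)$-equivariance of $\phi$ with Jordan kernel $\Aut(\P^1_{\Bbbk(Y)})$ and apply Lemma~\ref{lemma:group-theory}(i) together with Lemma~\ref{lemma:non-uniruled-surface} for the Jordan direction, and Theorem~\ref{theorem:Zarhin-AxP1} plus Corollary~\ref{corollary:not-Jordan} for the converse. The only cosmetic difference is that you justify the Jordan property of $\PGL_2(\Bbbk(Y'))$ via Theorem~\ref{theorem:Serre} where the paper cites Theorem~\ref{theorem:Pr-Shr}(i); both work.
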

\begin{proof}
By Theorem~\ref{theorem:Zarhin-rel-dim-1} we may assume
that $X$ is birational to $Y\times\P^1$.
Since $\phi$ is equivariant with respect to the group
$\Bir(X)$, we have an exact sequence
$$
1\longrightarrow \Bir(X)_{\phi}\longrightarrow\Bir(X)\longrightarrow \Bir(Y),
$$
where the action of $\Bir(X)_{\phi}$ is fiberwise with respect to $\phi$.
The group $\Bir(X)_{\phi}$ is isomorphic to $\Aut(\P^1_{\Bbbk(Y)})$,
and thus it is Jordan by Theorem~\ref{theorem:Pr-Shr}(i).

By construction the surface
$Y$ is not uniruled. We know from Lemma~\ref{lemma:non-uniruled-surface}
that~$\Bir(Y)$ has bounded finite
subgroups unless $Y$ is birational either to an abelian surface, or to a bielliptic surface,
or to a surface of Kodaira dimension $1$ such that the Jacobian fibration
of the pluricanonical fibration $\phi\colon S\to B$ is locally trivial.
If $Y$ is birational to none of the latter surfaces, then
the group $\Bir(X)$ is Jordan by Lemma~\ref{lemma:group-theory}(i).
If on the contrary~$Y$ is of one of these three types, then
the group~$\Bir(X)$ is not Jordan by Theorem~\ref{theorem:Zarhin-AxP1} and Corollary~\ref{corollary:not-Jordan}.
\end{proof}

\begin{lemma}
\label{lemma:rel-dim-2}
Let $X$ be a variety, and $\phi\colon X\dasharrow Y$
be the maximal rationally connected fibration.
Suppose that $\dim Y=\dim X-2$, and that $\phi$ has a rational section. Then $\Bir(X)$
is Jordan unless $X$ is birational to $Y\times\P^2$, and $\Bir(Y)$ has unbounded finite subgroups.
\end{lemma}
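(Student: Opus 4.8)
The plan is to exploit the $\Bir(X)$-equivariance of the maximal rationally connected fibration $\phi$ together with the group-theoretic machinery of Lemma~\ref{lemma:group-theory}, reducing everything to the behaviour of the generic fibre of $\phi$ and of the base. First I would fix a model on which $\phi$ is a morphism and write down the exact sequence
\[
1\longrightarrow \Bir(X)_\phi\longrightarrow \Bir(X)\longrightarrow \Bir(Y),
\]
where $\Bir(X)_\phi$ is the group of birational selfmaps acting fibrewise with respect to $\phi$. By construction $Y$ is not uniruled, so by Theorem~\ref{theorem:Pr-Shr}(ii) the group $\Bir(Y)$ is Jordan, and by Proposition~\ref{proposition:non-uniruled} it has finite subgroups of bounded rank.

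The next step is to identify the kernel. Since a fibrewise birational selfmap of $X$ is the same thing as a $\Bbbk(Y)$-birational selfmap of the generic fibre $X_\eta$, one has $\Bir(X)_\phi\cong\Bir(X_\eta)$. Here $X_\eta$ is a geometrically rational surface over the field $\KK=\Bbbk(Y)$; it has a $\KK$-point because $\phi$ has a rational section, and $\KK$ contains all roots of unity because $\Bbbk$ does. The argument now splits according to whether or not $X_\eta$ is $\KK$-rational.

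If $X_\eta$ is not $\KK$-rational, then Theorem~\ref{theorem:rational-surface-vs-BFS} applies and shows that $\Bir(X_\eta)$, hence $\Bir(X)_\phi$, has bounded finite subgroups. Feeding this into Lemma~\ref{lemma:group-theory}(ii), with $\Gamma'=\Bir(X)_\phi$ and $\Gamma''=\Bir(Y)$, yields that $\Bir(X)$ is Jordan; moreover in this case $X$ cannot be birational to $Y\times\P^2$, since that would force $X_\eta$ to be $\KK$-rational. If on the other hand $X_\eta$ is $\KK$-rational, then it is $\KK$-birational to $\P^2_\KK$ and, spreading this out over $Y$, we get that $X$ is birational to $Y\times\P^2$; in this situation $\Bir(X)_\phi\cong\Bir(\P^2_\KK)$ is Jordan by Theorem~\ref{theorem:Serre}. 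If $\Bir(Y)$ happens to have bounded finite subgroups, then Lemma~\ref{lemma:group-theory}(i) shows $\Bir(X)$ is Jordan as well, so the only remaining possibility is precisely the exceptional case of the statement, namely $X$ birational to $Y\times\P^2$ together with $\Bir(Y)$ having unbounded finite subgroups.

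I expect the only delicate point to be the passage between the $\KK$-rationality of $X_\eta$ and the birational splitting $X\sim Y\times\P^2$: the easy implication, used above, is that $\KK$-rationality of the generic fibre spreads out to a birational equivalence $X\sim Y\times\P^2$ over an open subset of $Y$, and it is only its contrapositive that the Jordan conclusion in the non-rational case needs. One should also take care that the generic fibre can be taken smooth with the section providing a smooth $\KK$-point, so that Theorem~\ref{theorem:rational-surface-vs-BFS} genuinely applies; the remaining verifications, that the Jordan, boundedness, and bounded-rank properties of $\Bir(Y)$ descend to the image of $\Bir(X)$, are formal.
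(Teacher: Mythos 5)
Your proposal is correct and follows essentially the same route as the paper: the same exact sequence for the $\Bir(X)$-equivariant fibration, the identification of the kernel with $\Bir$ of the generic fibre, Theorem~\ref{theorem:rational-surface-vs-BFS} in the non-split case combined with Lemma~\ref{lemma:group-theory}(ii), and Lemma~\ref{lemma:group-theory}(i) in the split case. The only cosmetic difference is that you invoke Theorem~\ref{theorem:Serre} for the Jordan property of $\Bir(\P^2_{\Bbbk(Y)})$ where the paper uses Theorem~\ref{theorem:Pr-Shr}(i); both are valid.
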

\begin{proof}
Let $S$ be the fiber of $\phi$ over the general schematic point of $Y$.
Then $S$ is a geometrically rational surface defined over the field
$\KK=\Bbbk(Y)$, and $S$ has a smooth $\KK$-point by assumption.
Since $\phi$ is equivariant with respect to $\Bir(X)$, we have an exact sequence
$$
1\longrightarrow \Bir(X)_{\phi}\longrightarrow \Bir(X)\longrightarrow\Bir(Y),
$$
where the action of $\Bir(X)_{\phi}$ is fiberwise with respect to $\phi$.

Suppose that $X$ is not birational to $Y\times\P^2$. This means that
$S$ is not rational over~$\KK$.
The group $\Bir(X)_{\phi}$ is isomorphic to the group
$\Bir(S)$, and thus has bounded finite subgroups by Theorem~\ref{theorem:rational-surface-vs-BFS}.
On the other hand, the variety $Y$ is not uniruled. Thus the group $\Bir(Y)$ is Jordan by
Theorem~\ref{theorem:Pr-Shr}(ii) and has finite subgroups
of bounded rank by Proposition~\ref{proposition:non-uniruled}. Hence
the group $\Bir(X)$ is Jordan by Lemma~\ref{lemma:group-theory}(ii).

Therefore, we see that $X$ is birational to $Y\times\P^2$.
The group $\Bir(X)_{\phi}$ is isomorphic to $\Bir(\P^2_{\Bbbk(Y)})$,
and thus it is Jordan by Theorem~\ref{theorem:Pr-Shr}(i).
This means that if $\Bir(Y)$ has bounded finite subgroups, then
the group $\Bir(X)$ is Jordan by Lemma~\ref{lemma:group-theory}(i).
\end{proof}

\begin{corollary}\label{corollary:dim-3-rel-dim-2}
Let $X$ be a threefold, and $\phi\colon X\dasharrow Y$
be the maximal rationally connected fibration.
Suppose that $\dim Y=1$. Then $\Bir(X)$
is not Jordan if and only if $X$ is birational to $Y'\times\P^2$,
where $Y'$ is an elliptic curve.
\end{corollary}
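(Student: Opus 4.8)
The plan is to deduce this from Lemma~\ref{lemma:rel-dim-2} in the case $\dim X=3$, $\dim Y=1$, for which the only hypothesis that is not immediately at hand is the existence of a rational section of $\phi$. Since $Y$ is the base of the maximal rationally connected fibration, it is a non-uniruled curve, and the general fiber of $\phi$ is a rationally connected (hence, being a surface, geometrically rational) variety. Because $\Bbbk$ is algebraically closed, the theorem of Graber--Harris--Starr \cite{Graber-Harris-Starr-2003} guarantees that a rationally connected fibration over a curve admits a section; replacing $X$ by a smooth model and resolving $\phi$ to a morphism over $Y$, this produces a rational section of $\phi$. Thus the hypotheses of Lemma~\ref{lemma:rel-dim-2} are met, and I expect this verification to be the main (indeed essentially the only) obstacle, the remainder being a formal unwinding.

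Granting the section, I would first treat the ``only if'' direction. Suppose $\Bir(X)$ is not Jordan. By Lemma~\ref{lemma:rel-dim-2} this forces simultaneously that $X$ is birational to $Y\times\P^2$ and that $\Bir(Y)$ has unbounded finite subgroups. Now $Y$ is a smooth projective curve which, being non-uniruled, is not rational, so $g(Y)\ge 1$. If $g(Y)\ge 2$, then $\Bir(Y)=\Aut(Y)$ is finite and so has bounded finite subgroups, contradicting the conclusion of the lemma. Hence $g(Y)=1$, that is, $Y$ is an elliptic curve, and $X$ is birational to $Y\times\P^2$ with $Y$ elliptic, as required.

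For the ``if'' direction I would check the converse: when $X$ is birational to $Y'\times\P^2$ with $Y'$ an elliptic curve, the group $\Bir(X)$ is not Jordan. Here I would use that $Y'\times\P^2$ is birational to $Y'\times\P^1\times\P^1$, so that $\Bir(Y'\times\P^1)$ embeds into $\Bir(X)$ as the subgroup acting on the first two factors and trivially on the third. Since $\Bbbk$ is algebraically closed, all torsion points of $Y'$ are defined over $\Bbbk$, so Theorem~\ref{theorem:Zarhin-AxP1} applied with $A=A'=Y'$ shows that $\Bir(Y'\times\P^1)$ is not Jordan. As the Jordan property passes to subgroups, $\Bir(X)$ cannot be Jordan either, which completes this direction.

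In summary, once Graber--Harris--Starr supplies the rational section, both implications follow formally: the ``only if'' from Lemma~\ref{lemma:rel-dim-2} together with the dichotomy on $g(Y)$, and the ``if'' from the Zarhin construction of Theorem~\ref{theorem:Zarhin-AxP1} and the stability of the Jordan property under passage to subgroups. No delicate geometry beyond the section statement is needed.
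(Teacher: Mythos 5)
Your proposal is correct and follows essentially the same route as the paper: invoke Graber--Harris--Starr for the rational section, apply Lemma~\ref{lemma:rel-dim-2} to reduce to $X$ birational to $Y\times\P^2$ with $\Bir(Y)$ having unbounded finite subgroups (forcing $Y$ elliptic), and use Theorem~\ref{theorem:Zarhin-AxP1} via the embedding $\Bir(Y'\times\P^1)\subset\Bir(Y'\times\P^1\times\P^1)=\Bir(X)$ for the converse. You merely spell out details the paper leaves implicit.
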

\begin{proof}
The map $\phi$ has a rational section by~\cite{Graber-Harris-Starr-2003}.
Thus by Lemma~\ref{lemma:rel-dim-2} we may assume
that~$X$ is birational to $Y\times\P^2$, and the group~$\Bir(Y)$ is infinite.
Since $Y$ is a non-rational curve, the assertion immediately follows from
Lemma~\ref{lemma:rel-dim-2} and Theorem~\ref{theorem:Zarhin-AxP1}.
\end{proof}

Now we are ready to prove Theorem~\ref{theorem:dim-3}.

\begin{proof}[Proof of Theorem~\textup{\ref{theorem:dim-3}}]
Let $\phi\colon X\dasharrow Y$ be the maximal rationally connected fibration.
If~\mbox{$\dim Y=0$}, then $X$
is rationally connected, so that $\Bir(X)$ is Jordan by
Theorem~\ref{theorem:Pr-Shr}(i).
If $\dim Y=3$, then $X$
is not uniruled, so that $\Bir(X)$ is Jordan by
Theorem~\ref{theorem:Pr-Shr}(ii).
If~\mbox{$\dim Y=2$}, then the assertion follows from
Corollary~\ref{corollary:dim-3-rel-dim-1}.
Finally, if $\dim Y=1$,
then the assertion follows from Corollary~\ref{corollary:dim-3-rel-dim-2}.
\end{proof}

We conclude the paper with the following question.

\begin{question}\label{question:rational}
Let $X$ be a rationally connected threefold over an algebraically closed
field of characteristic zero. Suppose that $X$ is not rational. Is it true that
the group $\Bir(X)$ has bounded finite subgroups?
\end{question}

At the moment we do not have any reasonable expectation
about the answer to Question~\ref{question:rational}. If the answer appears to be positive, proving this
may require some delicate work with automorphism groups of Fano varieties, including
singular ones (cf.~\cite{Prokhorov-GFano-2}, \cite{Prokhorov-planes}, \cite{PrzyjalkowskiShramov2016}).
Also, at the moment we do not know the answer to Question~\ref{question:rational}
in the case of a smooth cubic threefold, and believe that working out this example may
be very instructive.

\def\cprime{$'$}


\end{document}